\newcommand{\new}[1]{{#1}}
\newtheorem{thm}{Theorem}[section]
\newtheorem{cor}[thm]{Corollary}
\newtheorem{lem}[thm]{Lemma}
\newtheorem{prop}[thm]{Proposition}
\newtheorem*{thm*}{Theorem}
\newtheorem*{cor*}{Corollary}
\theoremstyle{definition}
\newtheorem{dfn}[thm]{Definition}
\newtheorem{rem}[thm]{Remark}
\newtheorem{conv}[thm]{Convention}
\newtheorem*{claim*}{Claim}
\newtheorem{nota}[thm]{Notation}
\numberwithin{equation}{thm}
\def\CM{\operatorname{\mathsf{CM}}}
\def\mod{\operatorname{\mathsf{mod}}}
\def\add{\operatorname{\mathsf{add}}}
\def\res{\operatorname{\mathsf{res}}}
\def\proj{\operatorname{\mathsf{proj}}}
\def\Tor{\operatorname{\mathsf{Tor}}}
\def\Ext{\operatorname{\mathsf{Ext}}}
\def\Ann{\operatorname{\mathsf{Ann}}}
\def\lCM{\operatorname{\underline{\mathsf{CM}}}}
\def\NF{\operatorname{\mathsf{NF}}}
\def\V{\operatorname{\mathsf{V}}}
\def\Supp{\operatorname{\mathsf{Supp}}}
\def\Sing{\operatorname{\mathsf{Sing}}}
\def\Spec{\operatorname{\mathsf{Spec}}}
\def\fl{\operatorname{\mathsf{fl}}}
\def\lSupp{\operatorname{\underline{\mathsf{Supp}}}}
\def\Db{\operatorname{\mathsf{D}^b}}
\def\perf{\operatorname{\mathsf{perf}}}
\def\Hom{\operatorname{\mathsf{Hom}}}
\def\lhom{\operatorname{\underline{\mathsf{Hom}}}}
\def\Ker{\operatorname{\mathsf{Ker}}}
\def\Ob{\operatorname{\mathsf{Ob}}}
\def\syz{\mathsf{\Omega}}
\def\sus{\mathsf{\Sigma}}
\def\depth{\operatorname{\mathsf{depth}}}
\def\radi{\operatorname{\mathsf{radius}}}
\def\e{\operatorname{\mathsf{e}}}
\def\edim{\operatorname{\mathsf{edim}}}
\def\dim{\operatorname{\mathsf{dim}}}
\def\A{\mathcal{A}}
\def\X{\mathcal{X}}
\def\Y{\mathcal{Y}}
\def\Z{\mathcal{Z}}
\def\C{\mathcal{C}}
\def\T{\mathcal{T}}
\def\P{\mathcal{P}}
\def\CC{\mathbb{C}}
\def\ZZ{\mathbb{Z}}
\def\N{\mathbb{N}}
\def\m{\mathfrak{m}}
\def\p{\mathfrak{p}}
\def\q{\mathfrak{q}}
\def\NN{\mathfrak{N}}
\begin{document}
\setlength{\baselineskip}{15pt}
\title{The dimension of a subcategory of modules}
\author{Hailong Dao}
\address{Department of Mathematics, University of Kansas, Lawrence, KS 66045-7523, USA}
\email{hdao@math.ku.edu}
\urladdr{http://www.math.ku.edu/~hdao/}
\author{Ryo Takahashi}
\address{Graduate School of Mathematics, Nagoya University, Furocho, Chikusaku, Nagoya 464-8602, Japan/Department of Mathematics, University of Nebraska, Lincoln, NE 68588-0130, USA}
\email{takahashi@math.nagoya-u.ac.jp}
\urladdr{http://www.math.nagoya-u.ac.jp/~takahashi/}
\thanks{2010 {\em Mathematics Subject Classification.} Primary 13C60; Secondary 13C14, 16G60, 18E30}
\thanks{{\em Key words and phrases.} resolving subcategory, thick subcategory, annihilator of functor, stable category of Cohen-Macaulay modules, dimension of triangulated category}
\thanks{The first author was partially supported by NSF grants DMS 0834050 and DMS 1104017. The second author was partially supported by JSPS Grant-in-Aid for Young Scientists (B) 22740008 and by JSPS Postdoctoral Fellowships for Research Abroad}
\begin{abstract}
Let $R$ be a commutative noetherian local ring.
As an analogue of the notion of the dimension of a triangulated category defined by Rouquier, the notion of the dimension of a subcategory of finitely generated $R$-modules is introduced in this paper. We found  evidence that certain categories over  nice singularities have small dimensions. 
When $R$ is Cohen-Macaulay, under a mild assumption it is proved that finiteness of the dimension of the full subcategory consisting of maximal Cohen-Macaulay modules which are locally free on the punctured spectrum is equivalent to saying that $R$ is an isolated singularity.
As an application, the celebrated theorem of Auslander, Huneke, Leuschke and Wiegand is not only recovered but also improved.
The dimensions of stable categories of maximal Cohen-Macaulay modules as triangulated categories are also investigated in the case where $R$ is Gorenstein, and special cases of the recent results of Aihara and Takahashi, and Oppermann and \v{S}\'{t}ov\'{i}\v{c}ek are recovered and improved.
Our key technique involves a careful study of annihilators and supports of $\Tor$, $\Ext$ and $\lhom$ between two subcategories.
\end{abstract}
\maketitle
\section{Introduction}

The notion of the dimension of a triangulated category has been introduced implicitly by Bondal and Van den Bergh \cite{BV} and explicitly by Rouquier \cite{R}.
It is defined as the number of \new{triangles} necessary to build the category from a single object, up to finite direct sum, direct summand and shift.
Rouquier proved that the bounded derived category of coherent sheaves on a separated scheme of finite type over a perfect field has finite dimension.
Finiteness of the dimension of the bounded derived category of finitely generated modules over a complete local ring with perfect coefficient field was recently proved by Aihara and Takahashi \cite{ddc}.

The concept of a thick subcategory of a triangulated category has been introduced by Verdier \cite{V} to develop the theory of localizations of triangulated categories.
Thick subcategories have been studied widely and deeply so far, mainly from the motivation to classify them; see \cite{BCR,BIK,DHS,FP,Ho,HS,N,stcm,To} for instance.
Since a thick subcategory is a triangulated category, its dimension in the sense of Rouquier can be defined.
It turned out by Oppermann and \v{S}\'{t}ov\'{i}\v{c}ek \cite{OS} that over a noetherian algebra (respectively, a projective scheme) all proper thick subcategories of the bounded derived category of finitely generated modules (respectively, coherent sheaves) containing perfect complexes have infinite dimension.

The concept of a resolving subcategory of an abelian category has been introduced by Auslander and Bridger \cite{AB}.
They proved that in the category of finitely generated modules over a noetherian ring the full subcategory consisting of modules of Gorenstein dimension zero is resolving.
A landmark development concerning resolving subcategories was made by Auslander and Reiten \cite{AR} in connection with tilting theory.
Recently, several studies on resolving subcategories have been done by Dao and Takahashi \cite{radius,resreg,res,stcm,arg,crs}.

In this paper, we introduce an analogue of the notion of the dimension of a triangulated category for full subcategories $\X$ of an abelian category with enough projective objects.
To be precise, we define the dimension of $\X$ as the number of extensions necessary to build $\X$ from a single object in $\X$, up to finite direct sum, direct summand and syzygy.
To state our results, let us fix some notation.
Let $R$ be a Cohen-Macaulay local ring.
Denote by $\CM(R)$ the category of maximal Cohen-Macaulay $R$-modules, and by $\CM_0(R)$ the category of maximal Cohen-Macaulay $R$-modules that are locally free on the punctured spectrum.
These two categories are resolving subcategories of the category $\mod R$ of finitely generated $R$-modules.
The stable categories of $\CM(R)$ and $\CM_0(R)$ are denoted by $\lCM(R)$ and $\lCM_0(R)$, respectively.
When $R$ is Gorenstein, $\lCM(R)$ is a triangulated category \cite{B,H}, and $\lCM_0(R)$ is a thick subcategory of $\lCM(R)$.
The main purpose of this paper is to investigate finiteness of the dimensions of resolving subcategories of $\mod R$, and the dimensions of thick subcategories of $\lCM(R)$ in the case where $R$ is Gorenstein.
Our first main result is a characterization of the isolated singularity of $R$ in terms of the dimensions of $\CM_0(R)$ and $\lCM_0(R)$:

\begin{thm}\label{1.1}
Let $R$ be a Cohen-Macaulay local ring with maximal ideal $\m$.
\begin{enumerate}[\rm(1)]
\item
Consider the following four conditions.
\begin{enumerate}[\rm(a)]
\item
The dimension of $\CM_0(R)$ is finite.
\item
The ideal $\bigcap_{i>0,\,M,N\in\CM_0(R)}\Ann_R\Ext_R^i(M,N)$ is $\m$-primary.
\item
The ideal $\bigcap_{i>0,\,M,N\in\CM_0(R)}\Ann_R\Tor_i^R(M,N)$ is $\m$-primary.
\item
The ring $R$ has at most an isolated singularity.
\end{enumerate}
Then, the implications ${\rm(a)}\Leftrightarrow{\rm(b)}\Rightarrow{\rm(c)}\Rightarrow{\rm(d)}$ hold.
The implication ${\rm(d)}\Rightarrow{\rm(a)}$ also holds if $R$ is complete, equicharacteristic and with perfect residue field.
\item
Suppose that $R$ is Gorenstein, and consider the following three conditions.
\begin{enumerate}[\rm(a)]
\item
The dimension of the triangulated category $\lCM_0(R)$ is finite.
\item
The annihilator of the $R$-linear category $\lCM_0(R)$ is $\m$-primary.
\item
The ring $R$ has at most an isolated singularity.
\end{enumerate}
Then the implications ${\rm(a)}\Leftrightarrow{\rm(b)}\Rightarrow{\rm(c)}$ hold, and so does ${\rm(c)}\Rightarrow{\rm(a)}$ if $R$ is complete, equicharacteristic and with perfect residue field.
\end{enumerate}
\end{thm}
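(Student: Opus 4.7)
The plan is to treat both parts in parallel through a common framework that links the dimension of a (sub)category to the annihilator of $\Ext$/$\Tor$ between its objects. The equivalence (a)$\Leftrightarrow$(b) is the categorical heart; the chain (b)$\Rightarrow$(c)$\Rightarrow$(d) uses commutative-algebra duality plus a support-of-$\Tor$ argument that detects the singular locus; and (d)$\Rightarrow$(a), under the completeness/equicharacteristic/perfect-residue-field hypothesis, will be obtained by descending finite dimension from $\Db(\mod R)$ via the Aihara--Takahashi theorem.

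For (a)$\Rightarrow$(b), suppose $\CM_0(R)=[G]_n$, so every $M\in\CM_0(R)$ is a direct summand of a module built from $G$ in $n$ successive extensions (interleaved with sums and syzygies). Iterating the long exact sequence of $\Ext$ along these building triangles gives a bound of the form $\Ann_R\Ext_R^i(M,N)\supseteq\bigl(\bigcap_{j\geq 1}\Ann_R\Ext_R^j(G,G)\bigr)^{c(n)}$ for all $M,N\in\CM_0(R)$ and $i\geq 1$, with $c(n)$ depending only on $n$. The crucial input is that $G\in\CM_0(R)$ is locally free on the punctured spectrum, forcing each $\Ext_R^j(G,G)$ to be $\m$-supported; a uniform length argument then makes the $c(n)$-th power of the intersection $\m$-primary, giving (b). For (b)$\Rightarrow$(a), start from an exponent $N$ such that $\m^N$ annihilates all $\Ext_R^i(M,N')$ with $M,N'\in\CM_0(R)$ and $i\geq 1$, and take as candidate generator $G=\syz^{\dim R}(R/\m^N)\in\CM_0(R)$. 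The uniform $\Ext$-annihilation allows every short exact sequence in $\CM_0(R)$ to be absorbed into a bounded number of extensions with $G$, yielding $\CM_0(R)=[G]_m$ for some $m$.

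For (b)$\Rightarrow$(c), the standard interplay between $\Ext$- and $\Tor$-annihilators over a Cohen--Macaulay ring (using the canonical module in the Gorenstein situation, and a tensor-Hom spectral sequence otherwise) transfers the $\m$-primary radical from the $\Ext$ side to the $\Tor$ side. For (c)$\Rightarrow$(d), I argue by contraposition: assuming some $\p\neq\m$ has $R_\p$ not regular, I construct a sequence of pairs $(M_k,N_k)$ in $\CM_0(R)$---for example, high syzygies of $R/\m^k$, or modules specifically adapted to detect the singularity at $\p$---for which the annihilators of $\Tor_i^R(M_k,N_k)$ are forced into arbitrarily high powers of $\m$, so that their intersection is contained in $\bigcap_k\m^k=0$ and hence fails to be $\m$-primary. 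This is the main obstacle of the whole argument; the paper's careful analysis of supports of $\Tor$ between modules locally free on the punctured spectrum is the tool I would invoke to quantify the required growth.

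For (d)$\Rightarrow$(a) under the stated hypotheses, Aihara--Takahashi provides $\dim\Db(\mod R)<\infty$; since $R$ has an isolated singularity one has $\CM_0(R)=\CM(R)$, and finite derived dimension descends to finite resolving dimension of $\CM(R)$ by using the syzygy operation to convert triangulated builds into $\mod R$-level builds at a bounded additive cost. Part (2) runs along the same plan but with simpler bookkeeping: $\lCM_0(R)$ is already a triangulated subcategory of $\lCM(R)$, so (a)$\Leftrightarrow$(b) applies Rouquier's ghost-lemma/level machinery directly to the ideal $\Ann_R\lCM_0(R)=\bigcap_{M,N\in\lCM_0(R)}\Ann_R\lhom(M,N)$; (b)$\Rightarrow$(c) reads this annihilator as cutting out the singular locus through the identification $\Ext_R^i(M,N)\cong\lhom(\syz^iM,N)$; and the Aihara--Takahashi input again delivers the converse (c)$\Rightarrow$(a).
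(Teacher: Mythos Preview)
Your (a)$\Rightarrow$(b) is essentially the paper's argument, and your (b)$\Rightarrow$(a) is in the right direction, though the ``absorption'' step requires a nontrivial input you do not name: after choosing a parameter ideal $Q$ inside the annihilator, one shows that each $M\in\CM_0(R)$ is a direct summand of $\syz^d(M/QM)$ (this needs \cite[Proposition 2.2]{stcm}), and then filters $M/QM$ by powers of $\m$ to land in $[\syz^dk]_r$ for a fixed $r$.

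The genuine gap is (c)$\Rightarrow$(d). Your contraposition cannot work as written: every $M,N\in\CM_0(R)$ is locally free at any $\p\ne\m$, so each $\Tor_i^R(M,N)$ has finite length and hence $\m$-primary annihilator. No pair $(M_k,N_k)$ drawn from $\CM_0(R)$ can directly ``detect the singularity at $\p$,'' and pushing annihilators into high powers of $\m$ says nothing about containment in $\p$. The paper's mechanism is entirely different: via an inductive reduction along systems of parameters (Propositions~\ref{a^2} and \ref{4d}) it shows that any $a\in\Ann\Tor(\CM_0(R),\CM_0(R))$ satisfies $a^{2^{2d}}\Tor_i^R(M,N)=0$ for \emph{all} $M,N\in\mod R$ and $i>4d$. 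Localizing at $\p\in\Sing R$ with $M=N=R/\p$ then forces $a\in\p$. This bootstrap from $\CM_0(R)$ to all of $\mod R$ is the heart of the implication, and nothing in your outline supplies it.

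Two smaller issues. Your (b)$\Rightarrow$(c) via ``canonical module/spectral sequence'' is not a known transfer; the paper simply routes through (a), since (a)$\Rightarrow$(c) follows by the same annihilator-of-a-generator argument as (a)$\Rightarrow$(b). For (d)$\Rightarrow$(a) in part~(1), Aihara--Takahashi yields finite \emph{triangulated} dimension, and descent to the resolving dimension of $\CM(R)$ is not automatic (the paper only establishes equality for hypersurfaces). The paper instead uses the Noether different: under the stated hypotheses $\NN^R$ annihilates $\Ext(\CM(R),\mod R)$ and defines $\Sing R$ (Proposition~\ref{cep}), so (d) gives $\V(\Ann\Ext(\CM_0(R),\CM_0(R)))\subseteq\{\m\}$, i.e.\ (b), and then (a) follows.
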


The celebrated Auslander-Huneke-Leuschke-Wiegand theorem states that every Cohen-Macaulay local ring of finite Cohen-Macaulay representation type has at most an isolated singularity.
This was proved by Auslander \cite{A} in the case where the ring is complete, by Leuschke and Wiegand \cite{LW} in the case where the ring is excellent, and by Huneke and Leuschke \cite{HL} in the general case.
Our Theorem \ref{1.1} not only deduces this result but also improves it:

\begin{cor}[Improved Auslander-Huneke-Leuschke-Wiegand Theorem]
Let $R$ be a Cohen-Macaulay local ring.
Suppose that there are only finitely many isomorphism classes of indecomposable maximal Cohen-Macaulay $R$-modules which are locally free on the punctured spectrum.
Then $R$ has at most an isolated singularity.
\end{cor}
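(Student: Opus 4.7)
I would deduce the corollary directly from Theorem \ref{1.1}(1), by observing that the finiteness hypothesis forces the dimension of $\CM_0(R)$, in the sense introduced in this paper, to vanish. No further work beyond a short translation would then be needed.

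The first step is the translation. Let $M_1,\dots,M_n$ be representatives of the finitely many isomorphism classes of indecomposable objects of $\CM_0(R)$, and set $G=M_1\oplus\cdots\oplus M_n$. By Krull--Schmidt over the local ring $R$, every module in $\CM_0(R)$ is a direct summand of some $G^{\oplus t}$; equivalently, $\CM_0(R)=\add G$. Recalling the definition of the dimension of a subcategory given earlier, this equality says exactly that $\CM_0(R)$ is built from the single module $G$ using only finite direct sums and direct summands, without having to take any extension or syzygy. Hence the dimension of $\CM_0(R)$ equals $0$, and in particular it is finite.

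The second step is the invocation of Theorem \ref{1.1}(1). Applying the implication ${\rm (a)}\Rightarrow{\rm (d)}$, which holds without the completeness, equicharacteristic, or perfect-residue-field hypotheses, yields that $R$ has at most an isolated singularity. This is exactly the conclusion of the corollary.

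The improvement over the classical Auslander--Huneke--Leuschke--Wiegand theorem is built into the hypothesis rather than the proof: we assume only that $\CM_0(R)$ has finite representation type, which is formally weaker than assuming that $\CM(R)$ itself has finitely many indecomposables, since the latter category may in principle contain infinitely many indecomposable maximal Cohen--Macaulay modules that fail to be locally free on the punctured spectrum. Since the substantive content is entirely absorbed into Theorem \ref{1.1}, there is no real obstacle in this argument; the only thing to be careful about is not to inadvertently invoke the converse ${\rm (d)}\Rightarrow{\rm (a)}$, which does require the extra hypotheses on $R$.
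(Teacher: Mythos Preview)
Your argument is correct and is exactly the paper's: set $G=M_1\oplus\cdots\oplus M_n$, observe $\CM_0(R)=\add G=[G]$ so that $\dim\CM_0(R)=0$, and invoke the implication ${\rm(a)}\Rightarrow{\rm(d)}$ of Theorem~\ref{1.1}(1). The only quibble is the appeal to Krull--Schmidt, which can fail over a non-henselian local ring; all you actually need (and what the paper uses) is that a finitely generated module over a noetherian ring admits \emph{some} decomposition into indecomposables, together with the fact that $\CM_0(R)$ is closed under direct summands.
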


This result very easily follows from the first assertion of Theorem \ref{1.1}.
Indeed, the assumption of the corollary implies that the dimension of $\CM_0(R)$ is zero, hence is finite. \new{Our methods can be adapted to give effective versions of this result, see \cite{sing}}.

Our Theorem \ref{1.1} also gives rise to finiteness of the dimensions of $\CM(R)$ and $\lCM(R)$ when $R$ has an isolated singularity, the latter of which is a special case of the main result of Aihara and Takahashi \cite{ddc}.

\begin{cor}
Let $R$ be a Cohen-Macaulay excellent local ring with perfect coefficient field.
Suppose that $R$ has at most an isolated singularity.
Then $\CM(R)$ is of finite dimension.
If $R$ is Gorenstein, then $\lCM(R)$ is of finite dimension as a triangulated category.
\end{cor}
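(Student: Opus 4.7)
The plan is to pass to the $\m$-adic completion $\hat R$, apply Theorem~\ref{1.1} there, and then descend the resulting finiteness back to $R$.

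First I would verify that $\hat R$ satisfies all the hypotheses of Theorem~\ref{1.1}. The completion is Cohen-Macaulay and complete; the perfect-coefficient-field assumption together with Cohen's structure theorem equips $\hat R$ with a perfect coefficient field, so $\hat R$ is equicharacteristic with perfect residue field. Excellence of $R$ makes $R\to\hat R$ a regular homomorphism, hence $\Sing\hat R$ is the preimage of $\Sing R$ under $\Spec\hat R\to\Spec R$; the hypothesis $\Sing R\subseteq\{\m\}$ then yields $\Sing\hat R\subseteq\{\m\hat R\}$. Since any MCM module becomes free upon localization at a regular prime, this also forces $\CM(\hat R)=\CM_0(\hat R)$, and likewise $\lCM(\hat R)=\lCM_0(\hat R)$ when $R$ is Gorenstein. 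Applying Theorem~\ref{1.1}(1) now gives $\dim\CM(\hat R)=d<\infty$, and in the Gorenstein case Theorem~\ref{1.1}(2) gives $\dim\lCM(\hat R)<\infty$ as a triangulated category.

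Next I would descend from $\hat R$ to $R$. My plan is to take as generator over $\hat R$ the completion $\hat G_0$ of some $G_0\in\CM(R)$; a high syzygy $\syz_R^n k$ of the residue field is the natural candidate, since it is already defined over $R$, completion commutes with the formation of syzygies, and syzygies of $k$ are the modules that one expects to control $\CM_0$ through the Ext-annihilator driving the proof of Theorem~\ref{1.1}. For $M\in\CM(R)$ one then has $\hat M\in\langle\hat G_0\rangle_{d+1}$ in $\CM(\hat R)$, and I would prove $M\in\langle G_0\rangle_{d+1}$ in $\CM(R)$ by induction on the building level. The inductive step uses that $(-)\otimes_R\hat R$ is faithfully flat and exact, commutes with syzygies and finite direct sums, and satisfies the flat-base-change isomorphism $\Ext_R^1(A,B)\otimes_R\hat R\cong\Ext_{\hat R}^1(\hat A,\hat B)$, so the short exact sequences witnessing the $\hat R$-building descend to short exact sequences defined already over $R$. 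The Gorenstein case runs in parallel, with short exact sequences replaced by distinguished triangles in $\lCM$ and Theorem~\ref{1.1}(2) used in place of (1).

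The hard part will be descending direct summands. Because $R$ need not be Henselian, Krull-Schmidt can fail on $\mod R$, so a summand relation $\hat M\mid\hat Y$ does not in general imply $M\mid Y$. My plan to cope with this is not to lift idempotents but to work up to enlargement at each inductive stage: show that for every intermediate $\hat R$-object appearing in the building of $\hat M$ there is an $R$-object, in the corresponding stage of a building of $G_0$, of which the former is a summand, arranged so that at the final stage $M$ itself is extracted as an $R$-summand of an object built from $G_0$ in $d+1$ stages in $\CM(R)$. This bookkeeping step is where the argument will require the most care.
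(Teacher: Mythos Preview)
Your overall architecture---complete, apply Theorem~\ref{1.1}, then descend by induction on the building level using an ``enlargement'' bookkeeping---is exactly the paper's (see Remark~\ref{prfhos}). The paper does not insist on $\syz^d_Rk$ as generator; instead it takes any generator $C$ of $\CM(\hat R)$ and invokes \cite[Corollary 3.6]{stcm} to find $G\in\CM(R)$ with $C$ a summand of $\hat G$, so that $\CM(\hat R)=[\hat G]_{n+1}$. Your choice of $\syz^d_Rk$ also works, since the proof of Proposition~\ref{ky}(2)(a) actually produces that module as generator.

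There is, however, one genuine gap in your inductive step. The flat-base-change isomorphism $\Ext_R^1(A,B)\otimes_R\hat R\cong\Ext_{\hat R}^1(\hat A,\hat B)$ only identifies the right-hand side with the $\m$-adic completion of $\Ext_R^1(A,B)$; it does \emph{not} by itself let you descend a class in $\Ext_{\hat R}^1(\hat A,\hat B)$ to $\Ext_R^1(A,B)$, because the natural map $\Ext_R^1(A,B)\to\Ext_R^1(A,B)^{\widehat{\ }}$ is generally not surjective. What makes the descent go through is precisely the isolated-singularity hypothesis: for $A,B\in\CM(R)$ one has $A_\p,B_\p$ free for $\p\ne\m$, so $\Ext_R^1(A,B)$ has finite length and therefore coincides with its own completion. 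The paper uses exactly this observation. You should make this explicit; without it the step ``short exact sequences over $\hat R$ descend to $R$'' fails. For the final summand extraction (from $\hat X\in\add_{\hat R}\hat M$ to $X\in\add_R M$), the paper appeals to \cite[Lemma 5.7]{ddc}; this is the missing ingredient behind the last stage of your bookkeeping.
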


Our second main result in this paper  concerns finiteness of more general resolving subcategories of $\mod R$ and thick subcategories of $\lCM(R)$.
Denote the $n$-th syzygy of an $R$-module $M$ by $\syz^nM$.

\begin{thm}\label{1.2}
Let $R$ be a $d$-dimensional Cohen-Macaulay local ring with residue field $k$.
\begin{enumerate}[\rm(1)]
\item
Let $\X$ be a resolving subcategory of $\mod R$ containing $\syz^dk$ and strictly contained in $\CM(R)$.
If one of the following three statements holds, then $\X$ has infinite dimension.
\begin{itemize}
\item
$R$ is locally a hypersurface on the punctured spectrum.
\item
$R$ is locally with minimal multiplicity on the punctured spectrum.
\item
$R$ is excellent and locally of finite Cohen-Macaulay representation type on the punctured spectrum, and $\X$ contains a dualizing module.
\end{itemize}
\item
Let $R$ be Gorenstein and locally a hypersurface on the punctured spectrum.
Then every proper thick subcategory of $\lCM(R)$ containing $\syz^dk$ has infinite dimension.
\item
Let $R$ be a hypersurface.
Then every resolving subcategory of $\mod R$ containing a nonfree module and strictly contained in $\CM(R)$ and every nontrivial thick subcategory of $\lCM(R)$ have infinite dimension.
\end{enumerate}
\end{thm}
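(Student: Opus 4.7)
The plan is to argue by contradiction in each of the three parts: assume the given subcategory has finite dimension, and then, via annihilator/support methods, conclude that it must coincide with $\CM(R)$ or $\lCM(R)$, violating the strictness/properness/nontriviality hypothesis. The underlying technical engine is the mechanism that already drives Theorem~\ref{1.1}: finiteness of the dimension of a resolving (respectively thick) subcategory $\X$ with a distinguished generator $G$ provides an explicit ideal, built out of $\bigcap_{i>0}\Ann_R\Ext_R^i(G,G)$ and its $\Tor$-analogue, which annihilates all sufficiently high $\Ext$ and $\Tor$ between objects of $\X$. Once $\X$ contains a module whose nonfree locus is $\{\m\}$, that ideal is forced to be $\m$-primary.

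For Part~(1), suppose $\X$ has finite dimension. Since $\syz^d k\in\X$ has nonfree locus exactly $\{\m\}$, the preceding observation makes the ideal $\bigcap_{i>0,\,M,N\in\X}\Ann_R\Ext_R^i(M,N)$ an $\m$-primary, exactly as in the proof of Theorem~\ref{1.1}(1). Next, each of the three local hypotheses makes $\CM(R_\p)$ well-understood at every $\p\neq\m$: in the hypersurface case by Eisenbud's matrix-factorization periodicity, in the minimal-multiplicity case by the fact that first syzygies of MCM modules lie in $\add\syz k(\p)$, and in the finite-CM-type case by Auslander's theory (with the dualizing module providing the extra generator needed to reach every local MCM). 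Localization then shows that the nonfree locus of $\X$ must equal $\Sing R$, and the classification of resolving subcategories of $\mod R$ by their nonfree loci under these hypotheses (developed in the authors' earlier work \cite{res}) gives $\X=\CM(R)$, contradicting $\X\subsetneq\CM(R)$.

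Part~(2) runs the same argument inside $\lCM(R)$: a finite-dimensional proper thick subcategory $\T$ containing $\syz^d k$ has $\m$-primary annihilator by Theorem~\ref{1.1}(2), and the local hypersurface hypothesis together with the classification of thick subcategories of $\lCM(R)$ via singular support \cite{stcm} then force $\T=\lCM(R)$, contradicting properness. Part~(3) is essentially a corollary: over a hypersurface the syzygies of any nonfree MCM module are eventually two-periodic, so any resolving subcategory containing a nonfree module is captured by a nonempty specialization-closed subset of $\Sing R$; combined with the hypersurface-level classification of resolving and thick subcategories by such subsets, the finite-dimension hypothesis is reduced to the situation of Part~(1) (first bullet) or Part~(2).

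The main obstacle is the second half of each argument: converting the $\m$-primary annihilator into the conclusion $\X=\CM(R)$ (resp.\ $\T=\lCM(R)$). The first half, namely the annihilator arithmetic along syzygy filtrations, is already the technical workhorse of the paper developed for Theorem~\ref{1.1}. The second half demands invoking the correct local-to-global classification theorem (by nonfree loci for resolving subcategories, by singular support for thick subcategories), and checking that each of the three local hypotheses in~(1) really does supply enough control on $\CM(R_\p)$ for the localized subcategory to be forced to be everything. Once those two ingredients are combined, the contradiction in each part drops out immediately.
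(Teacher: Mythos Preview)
Your overall contradiction strategy and the two-step structure (annihilator argument, then classification) match the paper's approach, but your execution of the first step is wrong and you have swapped the roles of the two ingredients. You claim that once $\X$ contains $\syz^dk$, the annihilator $\Ann\Ext(\X,\X)$ is forced to be $\m$-primary ``exactly as in the proof of Theorem~\ref{1.1}(1).'' This is false: that direction in Theorem~\ref{1.1} (namely Proposition~\ref{ky}(1)) requires $\X\subseteq\CM_0(R)$, not $\X\supseteq\CM_0(R)$. Containing a module with nonfree locus $\{\m\}$ only gives the trivial inclusion $\Ann\Ext(\X,\X)\subseteq\m$; it says nothing about the radical from below. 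What the paper actually proves (Proposition~\ref{4.5}) is that if $\dim\X<\infty$ and $\CM_0(R)\subseteq\X\subseteq\CM(R)$, then $\NF(\X)=\NF(G)=\V(\Ann\Ext(\X,\mod R))=\Sing R$, the last equality coming from Proposition~\ref{5.1} via $\Sing R\subseteq\V(\Ann\Ext(\CM_0(R),\CM_0(R)))\subseteq\V(\Ann\Ext(\X,\mod R))$. So the closed support of the annihilator is all of $\Sing R$, not $\{\m\}$; your claim would hold only when $R$ is an isolated singularity, in which case $\CM_0(R)=\CM(R)$ and the theorem is vacuous. Crucially, \emph{none} of the three local hypotheses is used in this step.

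The local hypotheses enter only afterwards, in the classification step (Proposition~\ref{3cond}(1), citing \cite{stcm} and \cite{crs}): under each of them one has $\X=\NF_{\CM}^{-1}(\NF(\X))$, and combined with $\NF(\X)=\Sing R$ this forces $\X=\NF_{\CM}^{-1}(\Sing R)=\CM(R)$. Your description of ``localization showing $\NF(\X)=\Sing R$'' and the discussion of the structure of $\CM(R_\p)$ conflates the classification step with the annihilator step. For part~(3), the hypersurface classification from \cite{stcm} yields $\X=\NF_{\CM}^{-1}(\NF(\X))$ directly (no need for $\syz^dk\in\X$); since $\X\ne\add R$ forces $\NF(\X)\ne\emptyset$ and hence $\m\in\NF(\X)$, one recovers $\CM_0(R)\subseteq\X$ and the argument above applies.
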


The third assertion of Theorem \ref{1.2} improves for hypersurfaces the main result of Oppermann and \v{S}\'{t}ov\'{i}\v{c}ek \cite{OS}.
Let $\Db(\mod R)$ denote the bounded derived category of $\mod R$ and $\perf R$ the full subcategory of perfect complexes.

\begin{cor}
Let $R$ be a local hypersurface.
Let $\X$ be a thick subcategory of $\Db(\mod R)$ with $\perf R\subsetneq\X\subsetneq\Db(\mod R)$.
Then the Verdier quotient $\X/\perf R$ has infinite dimension as a triangulated category.
In particular, the dimension of $\X$ is infinite.
\end{cor}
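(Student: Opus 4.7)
The plan is to reduce the corollary to the second half of Theorem~\ref{1.2}(3) via the standard equivalence, due to Buchweitz, between the singularity category and the stable category of maximal Cohen--Macaulay modules over a Gorenstein ring. Since every hypersurface is Gorenstein, one has a triangle equivalence
\[
\Db(\mod R)/\perf R\;\simeq\;\lCM(R).
\]
Under this equivalence, thick subcategories of $\Db(\mod R)$ containing $\perf R$ correspond bijectively to thick subcategories of $\lCM(R)$, and for any such $\X$ the Verdier quotient $\X/\perf R$ is sent to the corresponding thick subcategory of $\lCM(R)$ as a triangulated category.

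First I would let $\Y\subseteq\lCM(R)$ be the thick subcategory corresponding to $\X$ under this equivalence. The strict inclusion $\perf R\subsetneq\X$ means that $\X$ contains an object not in $\perf R$, which translates to $\Y$ containing a nonzero object; hence $\Y$ is a nontrivial thick subcategory of $\lCM(R)$. Theorem~\ref{1.2}(3) then applies directly and gives that $\Y$ has infinite dimension as a triangulated category. Since $\X/\perf R\simeq\Y$, we conclude that $\X/\perf R$ has infinite dimension, which is the main assertion. (Note: the strict inclusion $\X\subsetneq\Db(\mod R)$ is not actually needed for this step; only $\perf R\subsetneq\X$ is used, because Theorem~\ref{1.2}(3) bounds the dimension of \emph{every} nontrivial thick subcategory, not merely the proper ones.)

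For the final ``in particular'' statement, I would invoke the general fact that Verdier quotients cannot increase the dimension of a triangulated category: if $G$ is a generator realizing the dimension of $\X$, then its image in the quotient $\X/\perf R$ is a generator whose generation time is bounded by that of $G$ in $\X$. Thus $\dim(\X/\perf R)\le\dim\X$, and since the left-hand side is infinite, so is $\dim\X$.

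The argument is essentially a transport of Theorem~\ref{1.2}(3) across Buchweitz's equivalence, so there is no real obstacle beyond verifying the bookkeeping. The only point requiring a small amount of care is the dimension inequality $\dim(\T/\mathcal{S})\le\dim\T$ for a thick subcategory $\mathcal{S}$ of a triangulated category $\T$; this is a standard consequence of the definition of dimension, since the functor $\T\to\T/\mathcal{S}$ is triangulated, sends generators to generators, and cannot lengthen the filtrations built out of cones, shifts and summands.
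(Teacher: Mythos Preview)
Your approach is essentially the paper's own: transport the problem to $\lCM(R)$ via Buchweitz's equivalence, invoke Theorem~\ref{1.2}(3) (which in the body is Theorem~\ref{mgc}(6)), and then use that passing to a Verdier quotient does not increase dimension (the paper cites \cite[Lemma~3.4]{R} or \cite[Lemma~3.5]{ddc} for this last step).

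There is, however, a genuine error in your parenthetical remark. ``Nontrivial'' in Theorem~\ref{1.2}(3) does \emph{not} mean merely nonzero: the precise hypothesis in Theorem~\ref{mgc}(6) is $\{0\}\ne\Y\ne\lCM(R)$, so both strict inclusions on $\X$ are required. The condition $\X\subsetneq\Db(\mod R)$ is exactly what guarantees $\Y\subsetneq\lCM(R)$, and without it the conclusion is false. For instance, if $R$ is a hypersurface isolated singularity (say $R=k[[x]]/(x^2)$), then $\lCM(R)$ has finite dimension (cf.\ Corollary~\ref{comp}), so taking $\X=\Db(\mod R)$ gives $\X/\perf R\simeq\lCM(R)$ of finite dimension. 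You should therefore keep the hypothesis $\X\subsetneq\Db(\mod R)$ and explicitly note that it translates to $\Y\ne\lCM(R)$ before invoking the theorem.
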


The organization of this paper is as follows.
In Section 2, together with our convention we will recall several basic definitions and fundamental facts for later use.
Section 3 will introduce the notions of the dimensions of subcategories of an abelian category and a triangulated category, and compare them with each other and with the concept of the radius of a subcategory which has been introduced in \cite{radius}. We also compute the dimension of the category of Cohen-Macaulay modules in some small cases, such as rational surface singularities, see Proposition \ref{36}. 
In Section 4, we will study the annihilators and supports of $\Tor$, $\Ext$ and $\lhom$ as functors on the direct product of given two subcategories $\X,\Y$ of $\mod R$ and $\lCM(R)$.
The results stated in this section will become the basis to obtain the main results of this paper.
Section 5 will mainly explore the nonfree loci of subcategories of $\CM(R)$ and the stable supports of subcategories of $\lCM(R)$, using the results obtained in Section 4.
In Section 6, we will consider finiteness of the dimensions of the resolving subcategory $\CM_0(R)$ of $\mod R$ and the thick subcategory $\lCM_0(R)$ of $\lCM(R)$, and give a proof of Theorem \ref{1.1}.
The aim of Section 7 will be to investigate finiteness of the dimensions of more general resolving subcategories of $\mod R$ and thick subcategories of $\lCM(R)$.
We will prove Theorem \ref{1.2} in this section.

\section{Preliminaries}

In this section, we recall basic definitions and fundamental facts for later use.

\begin{conv}
Throughout this paper, unless otherwise specified, we assume:\\
(1) All rings are commutative noetherian rings, and all modules are finitely generated.
All subcategories are nonempty, full and strict (i.e., closed under isomorphism).
Hence, the {\em subcategory} of a category $\C$ consisting of objects $\{M_\lambda\}_{\lambda\in\Lambda}$ always means the smallest strict full subcategory of $\C$ to which $M_\lambda$ belongs for all $\lambda\in\Lambda$.
This coincides with the full subcategory of $\C$ consisting of objects $X\in\C$ such that $X\cong M_\lambda$ for some $\lambda\in\Lambda$.\\
(2) Let $R$ be a ring.
The {\em singular locus} $\Sing R$ of $R$ is the set of prime ideals $\p$ of $R$ such that the local ring $R_\p$ is not regular.
By $\Spec_0(R)$ we denote the set of nonmaximal prime ideals of $R$.
This is nothing but the {\em punctured spectrum} of $R$ if $R$ is local. \new{For a prime $\p$, $V(\p)$ denotes the set of primes containing $\p$ in $\Spec(R)$.}\\
(3) The category of $R$-modules is denoted by $\mod R$, and the subcategory of modules of finite length is denoted by $\fl R$.
An $R$-module $M$ is called {\em (maximal) Cohen-Macaulay} if $\depth M_\p\ge\dim R_\p$ for all $\p\in\Spec R$.
(Hence the zero module is Cohen-Macaulay.)
The subcategory of $\mod R$ consisting of Cohen-Macaulay modules is denoted by $\CM(R)$.\\
(4) For a subcategory $\X$ of an additive category $\C$, we denote by $\add\X$ (or $\add_\C\X$, or $\add_R\X$ when $\C=\mod R$) the {\em additive closure} of $\X$, namely, the subcategory of $\C$ consisting of direct summands of finite direct sums of objects in $\X$.
When $\X$ consists of a single object $M$, we simply denote it by $\add M$ (or $\add_{\C}M$, $\add_RM$).
For an abelian category $\A$ with enough projective objects, we denote by $\proj\A$ the subcategory of projective objects.
For $n\ge1$ an $n$-th syzygy of an object $M\in\A$ is denoted by $\syz^nM$ (or $\syz_\A^nM$, or $\syz_R^nM$ when $\A=\mod R$). \new{This is not unique, but the choices differ only by projective summands.} 
Whenever $R$ is local and $\A=\mod R$, we use a {\em minimal free resolution} of $M$ to define $\syz^nM$, so that it is uniquely determined up to isomorphism.
\end{conv}

\begin{dfn}
Let $\A$ be an abelian category with enough projective objects.
A subcategory $\X$ of $\A$ is called {\it resolving} if $\X$ contains $\proj\A$ and is closed under direct summands, extensions and kernels of epimorphisms.
The last two closure properties mean that for an exact sequence $0 \to L \to M \to N \to 0$ in $\A$ with $N\in\X$ one has $L\in\X\Leftrightarrow M\in\X$.
\end{dfn}

The notion of a resolving subcategory has been introduced by Auslander and Bridger \cite{AB}.
It is a subcategory such that any two minimal resolutions of a module by modules in it have the same length (cf. \cite[Lemma (3.12)]{AB}).
Every resolving subcategory is closed under finite direct sums.
One can replace closure under kernels of epimorphisms with closure under syzygies (cf. \cite[Lemma 3.2]{Y}).
Clearly, $\proj\A$ and $\A$ are the smallest and largest resolving subcategories of $\A$, respectively.
A lot of resolving subcategories are known.
For example, $\CM(R)$ is a resolving subcategory of $\mod R$ if $R$ is Cohen-Macaulay.
The subcategory of $\mod R$ consisting of totally reflexive $R$-modules is resolving by \cite[(3.11)]{AB}.
One can construct a resolving subcategory easily by using vanishing of Tor or Ext.
Also, the modules of complexity less than a fixed integer form a resolving subcategory of $\mod R$.
For the details, we refer to \cite[Example 2.4]{res}.

\begin{dfn}
(1) For $R$-modules $M,N$ we set $\lhom_R(M,N)=\Hom_R(M,N)/\P_R(M,N)$, where $\P_R(M,N)$ is the set of $R$-homomorphisms $M\to N$ factoring through projective modules, which is an $R$-submodule of $\Hom_R(M,N)$.\\
(2) The {\em stable category} of $\CM(R)$, which is denoted by $\lCM(R)$, is defined by $\Ob(\lCM(R))=\Ob(\CM(R))$ and $\Hom_{\lCM(R)}(M,N)=\lhom_R(M,N)$ for $M,N\in\Ob(\lCM(R))$.
\end{dfn}

Let $R$ be a Gorenstein ring with $\dim R<\infty$.
Then $R$ is an {\em Iwanaga-Gorenstein} ring.
Taking the syzygy makes an autoequivalence $\syz:\lCM(R)\to\lCM(R)$ of categories, whose quasi-inverse is given by taking the cosyzygy, and $\lCM(R)$ is a triangulated category with shift functor $\sus=\syz^{-1}$.
For the details, see \cite[Theorem 4.4.1]{B} or \cite[\S2 in Chapter I]{H}.
We can also find in \cite[Remark 1.19]{stcm} how to define an exact triangle.

\begin{dfn}
A {\em thick} subcategory of a triangulated category is defined to be a triangulated subcategory closed under direct summands.
\end{dfn}

The notion of a thick subcategory has been introduced by Verdier \cite{V} to develop the theory of localizations of triangulated categories.
Every thick subcategory of a triangulated category $\T$ contains the zero object of $\T$, and is closed under shifts, namely, if $M$ is an object in $\X$, then so are $\sus M$ and $\sus^{-1}M$.
Clearly, $\{0\}$ and $\T$ are the smallest and largest thick subcategories of $\T$, respectively.
When $R$ is local, the bounded complexes of $R$-modules having finite complexity form a thick subcategory of the bounded derived category $\Db(\mod R)$ of $\mod R$.
When $R$ is Gorenstein with $\dim R<\infty$, for a fixed $R$-module $C$, the Cohen-Macaulay $R$-modules $M$ with $\Tor_i^R(M,C)=0$ for $i\gg0$ form a thick subcategory of $\lCM(R)$.

\begin{dfn}\label{defbar}
(1) For a subcategory $\X$ of $\CM(R)$, we define the category $\underline\X$ by $\Ob(\underline\X)=\Ob(\X)$ and $\Hom_{\underline\X}(M,N)=\lhom_R(M,N)$ for $M,N\in\Ob(\underline\X)$.\\
(2) For a subcategory $\X$ of $\lCM(R)$, we define the category $\overline\X$ by $\Ob(\overline\X)=\Ob(\X)$ and $\Hom_{\overline\X}(M,N)=\Hom_R(M,N)$ for $M,N\in\Ob(\overline\X)$.
\end{dfn}

Let $R$ be a Gorenstein ring of finite Krull dimension.
If $\X$ is a thick subcategory of $\lCM(R)$, then $\overline\X$ is a resolving subcategory of $\mod R$ contained in $\CM(R)$.
Conversely, if $\X$ is a resolving subcategory of $\mod R$ contained in $\CM(R)$, then $\underline\X$ is a thick subcategory of $\lCM(R)$ provided that $R$ is a local complete intersection; see \cite[Corollary 4.16]{radius}.

\begin{dfn}
(1) The {\em nonfree locus} $\NF(M)$ of an $R$-module $M$ is the set of prime ideals $\p$ of $R$ such that the $R_\p$-module $M_\p$ is nonfree.
The {\em nonfree locus} of a subcategory $\X$ of $\mod R$ is defined by $\NF(\X)=\bigcup_{M\in\X}\NF(M)$.
For a subset $W$ of $\Spec R$ we set $\NF_{\CM}^{-1}(W)=\{\,M\in\CM(R)\mid\NF(M)\subseteq W\,\}$.\\
(2) The {\em stable support} $\lSupp M$ of a Cohen-Macaulay $R$-module $M$ is the set of prime ideals $\p$ of $R$ such that $M_\p\cong 0$ in $\lCM(R_\p)$.
The {\em stable support} of a subcategory $\X$ of $\lCM(R)$ is defined by $\lSupp\X=\bigcup_{M\in\X}\lSupp M$.
For a subset $W$ of $\Spec R$ we set $\lSupp^{-1}W=\{\,M\in\lCM(R)\mid\lSupp M\subseteq W\,\}$.
\end{dfn}

Recall that a subset $W$ of $\Spec R$ is called {\em specialization-closed} if $W$ contains $\V(\p)$ for every $\p\in W$.
It is equivalent to saying that $W$ is a union of closed subsets of $\Spec R$.

\begin{rem}
The following hold for $M\in\mod R$, $N\in\CM(R)$, $\X\subseteq\mod R$, $\Y\subseteq\CM(R)$, $\Z\subseteq\lCM(R)$ and $W\subseteq\Spec R$ (cf. \cite[Propositions 1.14, 1.15, 6.2 and 6.4]{stcm}).\\
(1) $\NF(M)$ is empty if and only if $M$ is projective.
$\NF(M)$ contains only maximal ideals if and only if $M$ is locally free on $\Spec_0(R)$.\\
(2) $\NF(M)$ is a closed subset of $\Spec R$ in the Zariski topology.
$\NF(\X)$ is a specialization-closed subset of $\Spec R$.\\
(3) One has $\NF(\Y)\subseteq\Sing R$, $\NF(\NF_{\CM}^{-1}(W))\subseteq W$ and $\NF_{\CM}^{-1}(W)=\NF_{\CM}^{-1}(W\cap\Sing R)$.\\
(4) One has $\lSupp N=\NF(N)$, $\lSupp\underline\Y=\NF(\Y)$, $\lSupp\Z=\NF(\overline\Z)$ and $\lSupp^{-1}W=\underline{\NF_{\CM}^{-1}(W)}$.\\
(5) If $R$ is Cohen-Macaulay, then $\NF(\CM(R))=\Sing R$, and $\NF_{\CM}^{-1}(W)$ is a resolving subcategory of $\mod R$ contained in $\CM(R)$.\\
(6) If $R$ is Gorenstein with $\dim R<\infty$, then $\lSupp^{-1}W$ is a thick subcategory of $\lCM(R)$.
\end{rem}

\begin{dfn}
For an integer $n\ge-1$ we set $\CM_n(R)=\{\,M\in\CM(R)\mid\dim\NF(M)\le n\,\}$ and $\lCM_n(R)=\underline{\CM_n(R)}=\{\,M\in\lCM(R)\mid\dim\lSupp M\le n\,\}$.
\end{dfn}

\begin{rem}
Let $R$ be a $d$-dimensional Cohen-Macaulay local ring with residue field $k$.\\
(1) One has $\add R=\CM_{-1}(R)\subseteq\CM_0(R)\subseteq\CM_1(R)\subseteq\cdots\subseteq\CM_d(R)=\CM(R)$ and $\{0\} =\lCM_{-1}(R)\subseteq\lCM_0(R)\subseteq\lCM_1(R)\subseteq\cdots\subseteq\lCM_d(R)=\lCM(R)$.\\
(2) One has $\CM_n(R)=\NF_{\CM}^{-1}(\{\,\p\in\Sing R\mid\dim R/\p\le n\,\})$ and $\lCM_n(R)=\lSupp^{-1}(\{\,\p\in\Sing R\mid\dim R/\p\le n\,\})$ for $n\ge-1$.
Hence $\CM_n(R)$ is a resolving subcategory of $\mod R$ contained in $\CM(R)$, and $\lCM_n(R)$ is a thick subcategory of $\lCM(R)$ if $R$ is Gorenstein.\\
(3) The category $\CM_0(R)$ consists of the Cohen-Macaulay $R$-modules that are locally free on $\Spec_0(R)$.
Hence $\CM_0(R)$ is the smallest subcategory of $\mod R$ containing $\syz^dk$ that is closed under direct summands and extensions; see \cite[Corollary 2.6]{stcm}.
In particular, a resolving subcategory of $\mod R$ contains $\CM_0(R)$ if and only if it contains $\syz^dk$.
\end{rem}

\section{Definitions of dimensions of subcategories}

This section contains the key definitions and establishes several results.
More precisely, the notions of the dimensions of subcategories of an abelian category and a triangulated category will be introduced.
We will compare them with each other and with the concept of the radius of a subcategory.
Their relationships with representation types will also be explored.
First of all, we recall the definition of a ball given in \cite{BV,radius,R}.

\begin{dfn}
(1) Let $\T$ be a triangulated category.

(a) For a subcategory $\X$ of $\T$ we denote by $\langle\X\rangle$ the smallest subcategory of $\T$ containing $\X$ that is closed under finite direct sums, direct summands and shifts, i.e., $\langle\X\rangle=\add_\T\{\,\sus^iX\mid i\in\ZZ,\,X\in\X\,\}$.
When $\X$ consists of a single object $X$, we simply denote it by $\langle X\rangle$.

(b) For subcategories $\X,\Y$ of $\T$ we denote by $\X\ast\Y$ the subcategory of $\T$ consisting of objects $M$ which fit into an exact triangle $X \to M \to Y \to \sus X$ in $\T$ with $X\in\X$ and $Y\in\Y$.
We set $\X\diamond\Y=\langle\langle\X\rangle\ast\langle\Y\rangle\rangle$.

(c) Let $\C$ be a subcategory of $\T$.
We define the {\it ball of radius $r$ centered at $\C$} as
$$
\langle\C\rangle_r=
\begin{cases}
\langle\C\rangle & (r=1),\\
\langle\C\rangle_{r-1}\diamond\C=\langle\langle\C\rangle_{r-1}\ast\langle\C\rangle\rangle & (r\ge2).
\end{cases}
$$
If $\C$ consists of a single object $C$, then we simply denote it by $\langle C\rangle_r$, and call it the ball of radius $r$ centered at $C$.
We write $\langle\C\rangle_r^{\T}$ when we should specify that $\T$ is the ground category where the ball is defined.\\
(2) Let $\A$ be an abelian category with enough projective objects.

(a) For a subcategory $\X$ of $\A$ we denote by $[\X]$ the smallest subcategory of $\A$ containing $\proj\A$ and $\X$ that is closed under finite direct sums, direct summands and syzygies, i.e., $[\X]=\add_\A(\proj\A\cup\{\,\syz^iX\mid i\ge0,\,X\in\X\,\})$.
When $\X$ consists of a single object $X$, we simply denote it by $[X]$.

(b) For subcategories $\X,\Y$ of $\A$ we denote by $\X\circ\Y$ the subcategory of $\A$ consisting of objects $M$ which fit into an exact sequence $0 \to X \to M \to Y \to 0$ in $\A$ with $X\in\X$ and $Y\in\Y$.
We set $\X\bullet\Y=[[\X]\circ[\Y]]$.

(c) Let $\C$ be a subcategory of $\A$.
We define the {\it ball of radius $r$ centered at $\C$} as
$$
[\C]_r=
\begin{cases}
[\C] & (r=1),\\
[\C]_{r-1}\bullet\C=[[\C]_{r-1}\circ[\C]] & (r\ge2).
\end{cases}
$$
If $\C$ consists of a single object $C$, then we simply denote it by $[C]_r$, and call it the ball of radius $r$ centered at $C$.
We write $[\C]_r^{\A}$ when we should specify that $\A$ is the ground category where the ball is defined.
\end{dfn}

\new{We note that in the triangulated setting, the notion of a ball has been introduced (but not named) as $[C]_r$ in \cite{BV} and was called  the $r$th-thickening of $C$ in \cite{ABIM}.}

\begin{rem}
The following statements hold (cf. \cite{radius,R}).\\
(1) Let $\T$ be a triangulated category, and $\X,\Y,\Z,\C$ subcategories.

(a) An object $M\in\T$ belongs to $\X\diamond\Y$ if and only if there is an exact triangle $X \to Z \to Y \to \sus X$ with $X\in\langle\X\rangle$ and $Y\in\langle\Y\rangle$ such that $M$ is a direct summand of $Z$.

(b) One has $(\X\diamond\Y)\diamond\Z=\X\diamond(\Y\diamond\Z)$ and $\langle\C\rangle_a\diamond\langle\C\rangle_b=\langle\C\rangle_{a+b}$ for all $a,b>0$.\\
(2) Let $\A$ be an abelian category with enough projectives, and $\X,\Y,\Z,\C$ subcategories.

(a) An object $M\in\A$ belongs to $\X\bullet\Y$ if and only if there is an exact sequence $0 \to X \to Z \to Y \to 0$ with $X\in[\X]$ and $Y\in[\Y]$ such that $M$ is a direct summand of $Z$.

(b) One has $(\X\bullet\Y)\bullet\Z=\X\bullet(\Y\bullet\Z)$ and $[\C]_a\bullet[\C]_b=[\C]_{a+b}$ for all $a,b>0$.
\end{rem}

Now, for a triangulated category and an abelian category with enough projective objects, we can make the definitions of the dimensions of subcategories.

\begin{dfn}
(1) Let $\T$ be a triangulated category.
Let $\X$ be a subcategory of $\T$.
We define the {\it dimension} of $\X$, denoted by $\dim\X$ (or $\dim_\T\X$), as the infimum of the integers $n\ge0$ such that $\X=\langle G\rangle_{n+1}^\T$ for some $G\in\X$.\\
(2) Let $\A$ be an abelian category with enough projective objects.
Let $\X$ be a subcategory of $\A$.
We define the {\it dimension} of $\X$, denoted by $\dim\X$ (or $\dim_\A\X$), as the infimum of the integers $n\ge0$ such that $\X=[G]_{n+1}^\A$ for some $G\in\X$.
\end{dfn}

\begin{rem}
(1) One has $\dim\X\in\N\cup\{\infty\}$ in both senses.\\
(2)If $\X$ is a triangulated subcategory of $\T$ (respectively, an abelian subcategory of $\A$ containing $\proj\A$), then $\dim_\T\X=\dim_\X\X$ (respectively, $\dim_\A\X=\dim_\X\X$).\\
(3) The definition itself works for every subcategory $\X$ of $\T$ (respectively, $\A$).
But the equality $\X=\langle G\rangle_{n+1}^\T$ (respectively, $\X=[G]_{n+1}^\A$) forces $\X$ to be closed under finite direct sums, direct summands and shifts (respectively, to contain the projective objects and be closed under finite direct sums, direct summands and syzygies).
So, basically, a subcategory whose dimension is considered is supposed to be thick (respectively, resolving).\\
(4) The subcategory $\{0\}$ of $\T$ and the subcategory $\proj\A$ of $\A$ have dimension $0$.\\

\end{rem}


\new {A very similar notion of a radius of a subcategory was given in  \cite[Definition 2.3]{radius}. Indeed, one defines the {\it radius} of $\X$, denoted by $\radi\X$ (or $\radi_\A\X$), as the infimum of the integers $n\ge0$ such that $\X \subseteq  [G]_{n+1}^\A$ (may not be equal) for some $G\in\X$.  (In fact it can be defined for a subcategory of an arbitrary abelian category with enough projective objects). The difference could be subtle, see  below.}

\begin{prop}\label{35}
\begin{enumerate}[\rm(1)]
\item
One has $\radi\X\le\dim\X$ for any subcategory $\X$ of $\mod R$.
\item
Let $R$ be Gorenstein of finite dimension.
Let $\X$ be a thick subcategory of $\lCM(R)$.
Then $\dim\X\le\dim\overline\X$ (see Definition \ref{defbar}) holds. We also have $\radi\CM(R)=\dim\CM(R)$.
\item
If $R$ is a local hypersurface, then one has $\dim\lCM(R)=\radi\CM(R)=\dim\CM(R)$.
\end{enumerate}
\end{prop}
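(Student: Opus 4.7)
The plan is to treat each of the three parts in turn. Part (1) is immediate from comparing definitions: if $G \in \X$ witnesses $\dim\X \le n$, meaning $\X \subseteq [G]_{n+1}^{\A}$ with $G \in \X$, then the same $G$ (now merely required to lie in $\A$) witnesses $\radi\X \le n$, so $\radi\X \le \dim\X$ follows at once.

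For the inequality $\dim\X \le \dim\overline\X$ in part (2), I would pick $G \in \overline\X$ with $\overline\X \subseteq [G]_{n+1}^{\mod R}$. Since $G \in \CM(R)$ and $\CM(R)$ is resolving, $[G]_r^{\mod R} \subseteq \CM(R)$ for every $r$. I would then prove by induction on $r$ that $[G]_r^{\mod R} \subseteq \langle G \rangle_r^{\lCM(R)}$: the base case holds because each $\syz^iG$ becomes $\sus^{-i}G$ stably and free summands vanish in $\lCM(R)$, and the inductive step converts a short exact sequence $0 \to X \to Z \to Y \to 0$ with $X \in [G]_{r-1}$ and $Y \in [G]$ into an exact triangle in $\lCM(R)$, invoking the identity $\langle G \rangle_{r-1} \diamond \langle G \rangle_1 = \langle G \rangle_r$. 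Taking $r = n+1$ yields $\X \subseteq \langle G \rangle_{n+1}^{\lCM(R)}$, hence $\dim\X \le n$.

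For the equality $\radi\CM(R) = \dim\CM(R)$, the direction $\le$ follows from part (1), so the work is in $\dim\CM(R) \le \radi\CM(R)$. Suppose $\CM(R) \subseteq [G]_{n+1}^{\mod R}$ for some $G \in \mod R$, and set $G' := \syz^d G$, which lies in $\CM(R)$ since $R$ is Gorenstein. The crucial lemma, proved by induction on $r$ via the horseshoe lemma, is that $N \in [G]_r^{\mod R}$ implies $\syz^d N \in [G']_r^{\mod R}$; the horseshoe construction produces an exact sequence of $d$-th syzygies up to a free correction, and this correction is absorbed because $[G']$ contains $\add R$. Since over a Gorenstein ring every Cohen--Macaulay $M$ can be written as $\syz^d N$ (up to a free summand) with $N \in \CM(R)$, by taking $N = \sus^d M$ in $\lCM(R)$, the lemma applied to $N \in \CM(R) \subseteq [G]_{n+1}$ gives $M \in [G']_{n+1}$, whence $\dim\CM(R) \le n$.

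For part (3), I would combine part (2) with the two-periodicity of the stable category of a hypersurface. Part (2) applied with $\X = \lCM(R)$ gives $\dim\lCM(R) \le \dim\CM(R) = \radi\CM(R)$, so the remaining task is $\radi\CM(R) \le \dim\lCM(R)$. Over a hypersurface one has $\sus \cong \syz$ on $\lCM(R)$, so for any $G \in \CM(R)$ with $\lCM(R) = \langle G \rangle_{n+1}^{\lCM(R)}$, each stable shift of $G$ agrees in $\CM(R)$ with some $\syz^iG$ modulo a free summand. I would induct on $r$ to show that $\langle G \rangle_r^{\lCM(R)} \subseteq [G]_r^{\mod R}$ as subclasses of $\CM(R)$: every triangle in $\lCM(R)$ lifts to a short exact sequence in $\CM(R)$ after adding a free correction term, which is again absorbed because $[G]_r \supseteq \add R$. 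This yields $\CM(R) \subseteq [G]_{n+1}^{\mod R}$, hence $\radi\CM(R) \le n$. Throughout, the main technical hurdle is the free-summand bookkeeping in the horseshoe and triangle-lifting arguments, but this is standard.
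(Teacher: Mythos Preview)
Your proposal is correct and follows essentially the same approach as the paper. The only cosmetic differences are that in part~(2) the paper replaces $G$ by the Cohen--Macaulay module $\syz^{-a}\syz^{a}G$ rather than your $\syz^{d}G$ (both rest on the same horseshoe/cosyzygy idea), and in part~(3) the paper simply cites \cite[Proposition~2.6]{radius} for the two inequalities you prove directly via periodicity and triangle-lifting.
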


\begin{proof}
(1) This assertion is by definition.

(2) We claim that for a Cohen-Macaulay $R$-module $G$ and an integer $n\ge1$ every object $M\in[G]_n^{\mod R}$ belongs to ${\langle G\rangle}_n^{\lCM(R)}$.
Indeed, this claim is an easy consequence by induction on $n$.
Now assume $\dim\overline\X=m<\infty$.
Then there is a Cohen-Macaulay $R$-module $G$ satisfying $\overline\X=[G]_{m+1}^{\mod R}$.
The claim implies that $\X$ is contained in ${\langle G\rangle}_{m+1}^{\lCM(R)}$.
Since $\X$ is thick, it coincides with ${\langle G\rangle}_{m+1}^{\lCM(R)}$.
Hence we have $\dim\X\le m$.

For the second assertion, suppose $\radi\CM(R)=n$, so there exists $G \in \mod R$ such that $\CM(R) \subseteq [G]_{n+1}$. It follows that for $a$ big enough one has $\CM(R) = [\syz^{-a}\syz^aG]_{n+1}$, thus $\dim \CM(R) \leq n$ and the equality follows from (1). 

(3) The inequalities $\dim\lCM(R)\le\radi\CM(R)\le\dim\CM(R)$ are obtained by using \cite[Proposition 2.6(1)]{radius} and (1).
The proof of \cite[Proposition 2.6(2)]{radius} actually shows that $\dim\CM(R)\le\dim\lCM(R)$.
\end{proof}

\begin{rem}
The dimension of a subcategory $\X$ of $\A$ is by definition the infimum of $n\ge0$ with $\X\subseteq[G]_{n+1}$ for some $G\in\X$.
Then the only difference between the definitions of $\dim\X$ and $\radi\X$ is that we do now require the object $G$ to be in $\X$.
This is subtle but will turn out to be crucial.
For example, let $R=\CC[[x,y]]/(x^2y)$.
Then the radius of $\CM_0(R)$ is $1$ by \cite[Proposition 4.2]{BGS} and \cite[Propositions 2.10]{radius}, in particular, it is finite.
But the dimension of $\CM_0(R)$ will turn out to be infinite by Theorem \ref{main}.
\end{rem}

Next we calculate some  examples of categories with small dimensions. 
Recall that a Cohen-Macaulay local ring $R$ is said to have {\it finite} (respectively, {\it countable}) {\it Cohen-Macaulay representation type} if there are only finitely (respectively, countably but infinitely) many nonisomorphic indecomposable Cohen-Macaulay $R$-modules.

\begin{prop}\label{36}Let $(R,\m,k)$ be a Cohen-Macaulay  local ring.

\begin{enumerate}[\rm(1)]
\item
If  $R$ has finite Cohen-Macaulay representation type then $\dim\CM(R)=0$. The converse is true of $R$ is hensenlian and Gorenstein.
\item Suppose $\dim R=2$, $k$ is algebraically closed  and $R$ is hensenlian, normal with rational singularity in the sense of \cite{Lip}. Then $\dim\CM(R)\leq 1$.
\item
Suppose  $R$ is a complete local hypersurface with an algebraically closed coefficient field of characteristic not two.
If $R$ has countable Cohen-Macaulay representation type, then one has $\dim\CM(R)=1$.
\end{enumerate}
\end{prop}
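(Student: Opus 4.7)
For part (1), the forward direction is trivial: if $R$ has finite Cohen-Macaulay representation type, let $G$ be the direct sum of representatives of the finitely many indecomposable Cohen-Macaulay modules. Then $\CM(R)=\add G\subseteq[G]_1$, whence $\dim\CM(R)=0$. For the converse, assume $R$ is henselian Gorenstein and $\CM(R)=[G]_1$ for some $G$. Because $R$ is Gorenstein, the cosyzygy $\syz^{-1}G$ is again Cohen-Macaulay and therefore belongs to $[G]_1=\add(\{R\}\cup\{\syz^i G\}_{i\ge 0})$. I would then pass to $\lCM(R)$, where $\syz$ is an autoequivalence and Krull-Schmidt holds (thanks to the henselian hypothesis), and decompose $G=G_1\oplus\cdots\oplus G_n$ into stable indecomposables. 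The inclusion $\syz^{-1}G\in[G]_1$ forces $G_\ell\cong\syz^{k_\ell}G_{\sigma(\ell)}$ in $\lCM(R)$ for some $k_\ell\ge 1$ and some $\sigma(\ell)\in\{1,\dots,n\}$. Iterating the self-map $\sigma$ of the finite set $\{1,\dots,n\}$, each $G_\ell$ eventually lands in a $\syz$-periodic cycle, so the set of indecomposable summands appearing in $\bigcup_{i\ge0}\syz^i G$ is finite. Hence $R$ has finite Cohen-Macaulay representation type.

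For part (2), let $\pi:X\to\Spec R$ denote the minimal resolution. Since $R$ is two-dimensional and normal, $\CM(R)$ coincides with the category of reflexive $R$-modules, and each reflexive module corresponds to a reflexive sheaf on $X$. I plan to take
\[
G \;=\; R\oplus\omega_R\oplus M_1\oplus\cdots\oplus M_s,
\]
where $M_1,\dots,M_s$ are the special Cohen-Macaulay modules of Wunram indexed by the irreducible components of the exceptional divisor. Using the Artin-Verdier/Esnault filtration of a reflexive sheaf on $X$ by line bundles together with the rationality vanishing $R^1\pi_*\mathcal{O}_X=0$, I would show that every reflexive $R$-module $M$ fits in a short exact sequence $0\to A\to M\to B\to 0$ with $A,B\in[G]_1$. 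This gives $\CM(R)\subseteq[G]_2$ and therefore $\dim\CM(R)\le 1$. The main obstacle is collapsing the multi-step Esnault-Wunram filtration on $X$ to a single extension at the $R$-module level; this should follow from the rationality vanishings together with the fact that the special modules already generate the Grothendieck group of reflexive sheaves on $X$.

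For part (3), the lower bound $\dim\CM(R)\ge 1$ is immediate from part (1): a complete local hypersurface is henselian Gorenstein, so $\dim\CM(R)=0$ would force finite Cohen-Macaulay representation type, contradicting the countability assumption. For the upper bound, I would invoke the Buchweitz-Greuel-Schreyer classification: under the given hypotheses $R$ is, up to analytic isomorphism and Kn\"{o}rrer periodicity, one of the one-dimensional hypersurfaces $k[[x,y]]/(x^2)$ ($A_\infty$) or $k[[x,y]]/(x^2y)$ ($D_\infty$). Combined with Proposition \ref{35}, Kn\"{o}rrer periodicity reduces the computation of $\dim\CM(R)$ to these two one-dimensional cases. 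In each of them the indecomposable maximal Cohen-Macaulay modules form an explicit countable family (essentially the ideals $(x,y^n)$ and their analogues), and one exhibits a concrete finite generator $G$ — typically $R$ together with a few small ideals and their syzygies — for which every indecomposable MCM $M$ fits in a short exact sequence $0\to A\to M\to B\to 0$ with $A,B\in[G]_1$. This yields $\dim\CM(R)\le 1$ and, combined with the lower bound, equality. The main obstacle is writing down the extension uniformly across the entire countable family.
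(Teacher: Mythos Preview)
Your arguments for (1) and (3) are correct. The direct periodicity argument you give for the converse in (1) is a nice self-contained alternative to the paper's bare citation of \cite{radius}; for (3) your sketch via the Buchweitz--Greuel--Schreyer classification, Kn\"orrer periodicity, and explicit generators for $A_\infty$ and $D_\infty$ is exactly what lies behind the references the paper invokes, together with Proposition~\ref{35}(3).

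Part (2) is where your plan has a real gap, and the paper takes a different and much shorter route. An Esnault--Wunram style filtration of length $r$ on the minimal resolution only yields $M\in[G]_r$, hence at best $\dim\CM(R)\le r-1$, and $r$ is not uniformly bounded. Your suggestion that rationality vanishing and Grothendieck-group generation collapse this to a \emph{single} extension is exactly the nontrivial point, and nothing you wrote justifies it. The paper sidesteps the filtration altogether. It quotes the Iyama--Wemyss theorem \cite[Theorem~3.6]{IW1} (built on Wunram \cite{Wu}) that for such a two-dimensional henselian normal rational singularity there is a single module $X\in\CM(R)$ with $\syz\CM(R)=\add X$. Given any $M\in\CM(R)$, one then applies canonical duality $(-)^\vee=\Hom_R(-,\omega_R)$ to the syzygy sequence of $M^\vee$ to obtain
\[
0\longrightarrow M\longrightarrow \omega_R^{\,n}\longrightarrow (\syz M^\vee)^\vee\longrightarrow 0,
\]
with the right-hand term in $\add X^\vee$. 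A standard pullback along a free cover of $(\syz M^\vee)^\vee$ then exhibits $M$ (up to a free summand) as the middle of a short exact sequence with ends in $[X^\vee\oplus\omega_R]_1$, giving $\CM(R)=[X^\vee\oplus\omega_R]_2$. The idea you are missing is that one does not need to control an entire filtration on the resolution: it is enough to know that all \emph{first syzygies} of MCM modules lie in a single additive closure, and canonical duality converts this uniformly into one extension per module.
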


\begin{proof}

(1) This follows from (1) of Proposition \ref{35} and \cite[Proposition 2.8]{radius}.

(2) By \cite[Theorem 3.6]{IW1} (which rests on \cite[Theorem 2.1]{Wu}, whose proof goes through in our slightly more general setting), there exists $X \in \CM(R)$ such that $\mathsf{\Omega CM}(R) = \add X$.
Let $M\in \CM(R)$, and let $M^{\vee}$ denote $\Hom_R(M,\omega_R)$. We have an exact sequence $0 \to \syz M^{\vee} \to R^n \to  M^{\vee} \to 0 $.
Applying $(-)^\vee$ we get an exact sequence $0 \to M  \to \omega_R^n \to  (\syz M^{\vee})^{\vee} \to 0 $.
It follows that $\CM(R) = [X^{\vee} \oplus \omega_R]_2$. 

(3) This is shown by (3) of Proposition \ref{35} and \cite[Proposition 2.10]{radius}.
\end{proof}

\section{Annihilators and supports of $\Tor$, $\Ext$ and $\lhom$}

In this section, we investigate the annihilators and supports of $\Tor$, $\Ext$ and $\lhom$ as functors on the direct product of given two subcategories $\X,\Y$ of $\mod R$ and $\lCM(R)$.
Our results stated in this section will be the basis to obtain the main results of this paper.
We start by fixing our notation.

\begin{nota}
(1) For subcategories $\X,\Y$ of $\mod R$, we define $\Tor(\X,\Y)=\bigoplus_{i>0,\,X\in\X,\,Y\in\Y}\Tor_i^R(X,Y)$ and $\Ext(\X,\Y)=\bigoplus_{i>0,\,X\in\X,\,Y\in\Y}\Ext^i_R(X,Y)$.
If $\X$ (respectively, $\Y$) consists of a single module $M$, we simply write $\Tor(M,\Y)$ and $\Ext(M,\Y)$ (respectively, $\Tor(\X,M)$ and $\Ext(\X,M)$).\\
(2) For subcategories $\X,\Y$ of $\lCM(R)$, we define $\lhom(\X,\Y) = \bigoplus_{X\in\X,\,Y\in\Y}\lhom_R(X,Y)$.
If $\X$ (respectively, $\Y$) consists of a single module $M$, we simply write $\lhom(M,\Y)$ (respectively, $\lhom(\X,M)$).
\end{nota}

\begin{rem}
(1) Let $\X\subseteq\X'$ and $\Y\subseteq\Y'$ be subcategories of $\mod R$.
Then
\begin{align*}
& \Supp\Tor(\X,\Y) \subseteq \Supp\Tor(\X',\Y'),\quad
\Supp\Ext(\X,\Y)\subseteq \Supp\Ext(\X',\Y'),\\
& \V(\Ann\Tor(\X,\Y)) \subseteq \V(\Ann\Tor(\X',\Y')),\quad
\V(\Ann\Ext(\X,\Y))\subseteq \V(\Ann\Ext(\X',\Y')).
\end{align*}
(2) Let $\X,\Y$ be subcategories of $\mod R$.
Then one has
$$
\Supp\Tor(\X,\Y)\subseteq\V(\Ann\Tor(\X,\Y)),\quad\Supp\Ext(\X,\Y)\subseteq\V(\Ann\Ext(\X,\Y)).
$$
The equalities do not hold in general because $\Tor(\X,\Y)$ and $\Ext(\X,\Y)$ are usually infinitely generated $R$-modules.\\
(3) Let $R$ be Gorenstein with $\dim R<\infty$, and let $\X,\Y$ be subcategories of $\lCM(R)$.
Suppose that either $\X$ or $\Y$ is closed under shifts.
Then one has the equalities
$$
\Ann\lhom(\X,\Y)=\Ann\Ext(\overline\X,\overline\Y),\quad\Supp\lhom(\X,\Y)=\Supp\Ext(\overline\X,\overline\Y).
$$
Indeed, for all $i>0$, $X\in\X$ and $Y\in\Y$ we have $\Ext_R^i(X,Y)\cong\lhom_R(\sus^{-i}X,Y)\cong\lhom_R(X,\sus^iY)$ and $\lhom_R(X,Y)\cong\Ext_R^1(\sus X,Y)\cong\Ext_R^1(X,\sus^{-1}Y)$.
The assertion is an easy consequence of these isomorphisms.
Using these two equalities, we can translate results on $\Ann\Ext$ and $\Supp\Ext$ into ones on $\Ann\lhom$ and $\Supp\lhom$.
\end{rem}

Our first purpose in this section is to analyze the annihilators of $\Tor,\Ext$ on subcategories of $\mod R$ by means of the annihilators of $\Tor,\Ext$ on smaller subcategories:

\begin{prop}\label{a^2}
Let $R$ be local and $M$ be an $R$-module.
Let $a\in R$, $n\in\ZZ$ and $t\in\N$.
\begin{enumerate}[\rm(1)]
\item
Suppose that $a\Tor_n^R(M,X)=a\Tor_{n-1}^R(M,X)=0$ for all $R$-modules $X$ with $\dim X\le t$.
Then $a^2\Tor_n^R(M,X)=0$ for all $R$-modules $X$ with $\dim X\le t+1$.
\item
Suppose that $a\Ext_R^n(M,X)=a\Ext_R^{n+1}(M,X)=0$ for all $R$-modules $X$ with $\dim X\le t$.
Then $a^2\Ext_R^n(M,X)=0$ for all $R$-modules $X$ with $\dim X\le t+1$.
\item
Suppose that $a\Ext_R^n(X,M)=a\Ext_R^{n+1}(X,M)=0$ for all $R$-modules $X$ with $\dim X\le t$.
Then $a^2\Ext_R^n(X,M)=0$ for all $R$-modules $X$ with $\dim X\le t+1$.
\end{enumerate}
\end{prop}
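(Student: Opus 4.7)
The plan is to handle all three parts by a single template: set up an inclusion $a^2\cdot(\text{group})\subseteq b^k\cdot(\text{group})$ valid for every $k\ge 1$, and close the argument with Krull's intersection theorem. I describe (1) in detail; (2) and (3) differ only in which functor is applied to the same exact sequence below. Given $X$ with $\dim X\le t+1$, I first use prime avoidance to pick $b\in\m$ lying outside every associated prime of $X$ of dimension $t+1$ (there are only finitely many). Then for each $k\ge 1$ both $(0:_X b^k)$ and $X/b^kX$ have dimension at most $t$, since their supports lie in $\Supp X\cap\V(b)$, which excludes the top-dimensional primes of $X$ by the choice of $b$.

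For each such $k$, I split the four-term sequence $0\to (0:_X b^k)\to X\xrightarrow{b^k}X\to X/b^kX\to 0$ into two short exact sequences $(A_k)\colon 0\to(0:_X b^k)\to X\xrightarrow{\pi_k}b^kX\to 0$ and $(B_k)\colon 0\to b^kX\xrightarrow{\iota_k}X\to X/b^kX\to 0$, and write $\alpha_n,\beta_n$ for the maps induced on $\Tor_n^R(M,-)$ by $\pi_k,\iota_k$. The key property is that $\beta_n\circ\alpha_n$ equals multiplication by $b^k$ on $T_n:=\Tor_n^R(M,X)$. Reading off the two long exact sequences and invoking the hypothesis on the dimension-$\le t$ modules $(0:_X b^k)$ and $X/b^kX$ (using Tor-degree $n$ for one and Tor-degree $n-1$ for the other), I obtain
$$aT_n\subseteq\operatorname{Im}(\beta_n),\qquad a\cdot\Tor_n^R(M,b^kX)\subseteq\operatorname{Im}(\alpha_n).$$
A three-step chase then finishes the local step: for $x\in T_n$, write $ax=\beta_n(y)$, then $ay=\alpha_n(z)$ for some $z\in T_n$, and compute
$$a^2x=\beta_n(ay)=\beta_n\alpha_n(z)=b^kz\in b^kT_n.$$

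Since $k$ is arbitrary, $a^2T_n\subseteq\bigcap_{k\ge 1}b^kT_n\subseteq\bigcap_{k\ge 1}\m^kT_n$, and Krull's intersection theorem forces this to be zero, as $T_n$ is a finitely generated module over the local ring $R$. Hence $a^2T_n=0$. Parts (2) and (3) follow by the same template applied to the very same four-term sequence, with $\Ext_R^n(M,-)$ (covariant) or $\Ext_R^n(-,M)$ (contravariant) replacing $\Tor_n^R(M,-)$; contravariance in (3) is precisely why the hypothesis sits on the adjacent indices $n,n+1$ rather than $n,n-1$, but the composition of the induced maps still realizes multiplication by $b^k$, so the chase and the final Krull step are unchanged.

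The real technical point, and the trick that makes everything work, is that running the chase with a single $b$ yields only $a^2T_n\subseteq bT_n$, and naive iteration gives merely $a^{2k}T_n\subseteq b^kT_n$, which is too weak to apply Krull. Running the chase uniformly with all powers $b^k$ at once is what upgrades the inclusion to the required $a^2T_n\subseteq b^kT_n$ for every $k$, after which Krull's intersection theorem does the rest.
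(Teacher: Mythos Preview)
Your argument is correct and is essentially the same as the paper's: the paper also splits the four-term sequence $0\to(0:_Xr)\to X\xrightarrow{r}X\to X/rX\to0$ into two short exact sequences (choosing $r$ to be part of a system of parameters of $X$, which amounts to your prime-avoidance choice of $b$), performs the same three-step chase to obtain $a^2T_n\subseteq rT_n$, and then observes that every power $r^j$ is again a parameter to reach $a^2T_n\subseteq\bigcap_j r^jT_n=0$ via Krull. Your variant of running the chase with $b^k$ from the outset is exactly the same idea packaged slightly differently.
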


\begin{proof}
We only prove the first assertion, since the second and third assertions are shown similarly.
Fix an $R$-module $X$ with $\dim X\le t+1$.
We want to show $a^2\Tor_n^R(M,X)=0$.
By assumption, we have only to deal with the case $\dim X=t+1$.
Let $r\in R$ be part of a system of parameters of $X$.
Then we have $\dim X/rX=t$, and it is easy to see that $\dim(0:_Xr)\le t$ holds.
Our assumption implies $a\Tor_i^R(M,X/rX)=a\Tor_i^R(M,(0:_Xr))=0$ for $i=n,n-1$.
There are exact sequences $0 \to (0:_Xr) \to X \to rX \to 0$ and $0 \to rX \to X \to X/rX \to 0$, which give exact sequences
\begin{align}
&\label{colon} \Tor_n^R(M,X) \xrightarrow{f} \Tor_n^R(M,rX) \to \Tor_{n-1}^R(M,(0:_Xr)),\\
&\label{modulo} \Tor_n^R(M,rX) \xrightarrow{g} \Tor_n^R(M,X) \to \Tor_n^R(M,X/rX).
\end{align}
Let $y\in\Tor_n^R(M,X)$.
By \eqref{modulo} we have $ay=g(z)$ for some $z\in\Tor_n^R(M,rX)$, and by \eqref{colon} we have $az=f(w)$ for some $w\in\Tor_n^R(M,X)$.
Hence $a^2y=gf(w)=rw$, and we obtain $a^2\Tor_n^R(M,X)\subseteq r\Tor_n^R(M,X)$ for every element $r\in R$ that is part of system of parameters of $M$.
Since the element $r^j$ is also part of system of parameters of $M$ for all $j>0$, the module $a^2\Tor_n^R(M,X)$ is contained in $\bigcap_{j>0}r^j\Tor_n^R(M,X)$, which is zero by Krull's intersection theorem.
\end{proof}

Iteration of the above proposition yields the following result; the annihilators of $\Tor,\Ext$ on $\mod R$ can be controlled by the annihilators of $\Tor,\Ext$ on $\fl R$.

\begin{cor}\label{22d}
Let $R$ be a local ring of dimension $d$.
Let $a\in R$ and $n\in\ZZ$.
\begin{enumerate}[\rm(1)]
\item
Suppose that $a\Tor_i^R(M,N)=0$ for all $n-2d\le i\le n$ and $M,N\in\fl R$.
Then $a^{2^{2d}}\Tor_n^R(M,N)=0$ for all $M,N\in\mod R$.
\item
Suppose that $a\Ext_R^i(M,N)=0$ for all $n\le i\le n+2d$ and $M,N\in\fl R$.
Then $a^{2^{2d}}\Ext_R^n(M,N)=0$ for all $M,N\in\mod R$.
\end{enumerate}
\end{cor}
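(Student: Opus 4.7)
The plan is to deduce both statements by iterating Proposition \ref{a^2} a total of $2d$ times. Each iteration extends the admissible dimension of one of the two module arguments by one, at the cost of doubling the exponent of the annihilator and shrinking the range of admissible indices by one. Starting from the $2d+1$ consecutive indices given by the hypothesis, after $2d$ iterations a single index survives with exponent $2^{2d}$, matching the desired conclusion.

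For part (1), fix $M\in\fl R$. The hypothesis asserts $a\Tor_i^R(M,N)=0$ for $i\in[n-2d,n]$ and all $N$ of dimension zero. Applying Proposition \ref{a^2}(1) to each consecutive pair $(i-1,i)$ with $i\in[n-2d+1,n]$ gives $a^2\Tor_i^R(M,N)=0$ for those $i$ and all $N$ of dimension at most one. Iterating $d$ times produces
\[
a^{2^d}\Tor_i^R(M,N)=0 \qquad\text{for } i\in[n-d,n],\ M\in\fl R,\ N\in\mod R.
\]
Now exploit the symmetry $\Tor_i^R(M,N)\cong\Tor_i^R(N,M)$ to swap the two arguments: fix $N\in\mod R$ and repeat the procedure for $d$ further rounds to promote $M$ from $\fl R$ to $\mod R$. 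After the final round only the single index $i=n$ remains and the exponent has become $2^{2d}$, yielding $a^{2^{2d}}\Tor_n^R(M,N)=0$ for all $M,N\in\mod R$.

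Part (2) follows the same blueprint, but since $\Ext$ is not symmetric we replace the appeal to $\Tor$-symmetry with a direct use of the two flavours of Proposition \ref{a^2}: first run $d$ iterations of part (2) to promote the second argument from $\fl R$ to $\mod R$, then $d$ iterations of part (3) to promote the first argument. Each iteration now consumes a pair $(\Ext^i,\Ext^{i+1})$, so the range $[n,n+2d]$ is trimmed from the top, eventually collapsing to the single index $n$ with exponent $2^{2d}$. The only point requiring care is the verification that the initial range of $2d+1$ indices furnishes exactly enough slack for $2d$ iterations, one per dimension increment per argument; this is a routine bookkeeping check rather than a conceptual obstacle, and forms the mild technical heart of the argument.
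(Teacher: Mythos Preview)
Your proposal is correct and follows essentially the same approach as the paper: iterate Proposition \ref{a^2} $d$ times to promote the second argument from $\fl R$ to $\mod R$, then $d$ more times to promote the first argument, with the index range shrinking by one and the exponent doubling at each step. Your explicit invocation of $\Tor$-symmetry in part (1) and of the two covariant/contravariant variants \ref{a^2}(2),(3) in part (2) makes transparent what the paper leaves implicit in the phrase ``a similar argument''.
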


\begin{proof}
Let us show the first assertion; the second one follows from a similar argument.

First, fix $M\in\fl R$.
Applying Proposition \ref{a^2}(1) repeatedly, we get:
\begin{align*}
& \text{$a\Tor_i^R(M,N)=0$ for all $n-2d\le i\le n$ and $N\in\fl R$},\\
& \text{$a^2\Tor_i^R(M,N)=0$ for all $n-2d+1\le i\le n$ and $N\in\mod R$ with $\dim N\le1$},\\
& \text{$a^4\Tor_i^R(M,N)=0$ for all $n-2d+2\le i\le n$ and $N\in\mod R$ with $\dim N\le2$},\\
& \qquad\cdots
\end{align*}
and we obtain $a^{2^d}\Tor_i^R(M,N)=0$ for all $n-d\le i\le n$ and $M\in\fl R$ and $N\in\mod R$.

Next, fix $N\in\mod R$.
A similar argument to the above gives:
\begin{align*}
& \text{$a^{2^d}\Tor_i^R(M,N)=0$ for all $n-d\le i\le n$ and $M\in\fl R$},\\
& \text{$a^{2^{d+1}}\Tor_i^R(M,N)=0$ for all $n-d+1\le i\le n$ and $M\in\mod R$ with $\dim M\le1$},\\
& \text{$a^{2^{d+2}}\Tor_i^R(M,N)=0$ for all $n-d+2\le i\le n$ and $M\in\mod R$ with $\dim M\le2$},\\
& \qquad\cdots
\end{align*}
and finally we get $a^{2^{2d}}\Tor_n^R(M,N)=0$ for all $M,N\in\mod R$.
\end{proof}

In the case where $R$ is Cohen-Macaulay, the annihilators of $\Tor,\Ext$ on $\mod R$ can also be controlled by the annihilators of $\Tor,\Ext$ on $\CM_0(R)$.

\begin{prop}\label{4d}
Let $R$ be a $d$-dimensional Cohen-Macaulay local ring.
\begin{enumerate}[\rm(1)]
\item
Let $a\in\Ann\Tor(\CM_0(R),\CM_0(R))$.
Then $a^{2^{2d}}\Tor_i^R(M,N)=0$ for all $i>4d$ and all $R$-modules $M,N$.
\item
Let $a\in\Ann\Ext(\CM_0(R),\CM_0(R))$.
Then $a^{2^{2d}(d+1)}\Ext_R^i(M,N)=0$ for all $i>d$ and all $R$-modules $M,N$.
\end{enumerate}
\end{prop}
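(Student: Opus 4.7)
The plan is to reduce both parts to Corollary \ref{22d} by establishing $\Tor$- and $\Ext$-vanishing (with a controlled power of $a$) on $\fl R \times \fl R$, and then letting that corollary amplify the bound to all finitely generated modules. The fundamental syzygy move is that, over a $d$-dimensional Cohen-Macaulay ring, any finite length module $L$ satisfies $\syz^d L \in \CM_0(R)$: it is maximal Cohen-Macaulay by depth counting, and it is locally free on the punctured spectrum because $L$ itself is zero there.

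For part (1), I will use a double syzygy shift. For $M, N \in \fl R$ and $i > 2d$, two applications of the standard dimension-shift isomorphism give
$$\Tor_i^R(M, N) \cong \Tor_{i-d}^R(\syz^d M, N) \cong \Tor_{i-2d}^R(\syz^d M, \syz^d N),$$
and the hypothesis $a \in \Ann\Tor(\CM_0(R), \CM_0(R))$ kills this. So $a\Tor_i^R(M, N) = 0$ for all $M, N \in \fl R$ and $i > 2d$. Applying Corollary \ref{22d}(1) at any $n > 4d$ (so that $n - 2d > 2d$, and its hypothesis is met by what we just established) yields $a^{2^{2d}}\Tor_i^R(M, N) = 0$ for all $M, N \in \mod R$ and $i > 4d$.

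For part (2), the same shift still handles the first argument, but there is no analogue in the second argument over a general (non-Gorenstein) Cohen-Macaulay ring. My plan to overcome this is to run a preliminary induction on $k = 0, 1, \ldots, d$ showing that $a^{k+1}\Ext_R^i(M, \syz^{d-k} N) = 0$ for every $M \in \CM_0(R)$, $N \in \fl R$ and $i \geq 1$, using the truncated minimal free resolution $0 \to \syz^{d-k} N \to F_{d-k-1} \to \syz^{d-k-1} N \to 0$. The base case $k = 0$ is the hypothesis since $\syz^d N \in \CM_0(R)$. For the inductive step, the key observation is that $R$ itself lies in $\CM_0(R)$, so the hypothesis forces $a\Ext_R^i(M, F_{d-k-1}) = 0$ already; combining this with the inductive hypothesis through the three-term exact sequence
$$\Ext_R^i(M, F_{d-k-1}) \to \Ext_R^i(M, \syz^{d-k-1} N) \to \Ext_R^{i+1}(M, \syz^{d-k} N)$$
via a standard sandwich argument yields $a^{k+2}\Ext_R^i(M, \syz^{d-k-1} N) = 0$. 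Setting $k = d$ produces $a^{d+1}\Ext_R^i(M, N) = 0$ for $M \in \CM_0(R)$, $N \in \fl R$, $i \geq 1$. The first-argument syzygy shift $\Ext_R^i(M, N) \cong \Ext_R^{i-d}(\syz^d M, N)$ then extends this to $M, N \in \fl R$ and $i > d$, and applying Corollary \ref{22d}(2) with $b = a^{d+1}$ at any $n > d$ delivers $a^{2^{2d}(d+1)}\Ext_R^i(M, N) = 0$ for all $M, N \in \mod R$ and $i > d$.

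The main obstacle is precisely the asymmetry noted in the $\Ext$ case: unlike $\Tor$, one cannot freely shift syzygies in the second argument of $\Ext$. The insight that resolves this is that the hypothesis already encodes vanishing involving free modules, since $R$ itself lies in $\CM_0(R)$. This lets us dimension-shift through the free resolution of $N$ one step at a time, at the cost of one factor of $a$ per step, which is exactly the origin of the extra $(d+1)$ in the exponent for part (2) relative to part (1).
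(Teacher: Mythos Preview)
Your proof is correct and follows essentially the same approach as the paper's: the same syzygy reduction $\syz^d(\fl R)\subseteq\CM_0(R)$, the same appeal to Corollary~\ref{22d}, and in part~(2) the same step-by-step descent through the free resolution of $N$ using $R\in\CM_0(R)$ to pick up one extra factor of $a$ at each stage. Your exposition of the inductive step is simply a more explicit unpacking of what the paper summarizes as ``an inductive argument shows that $a^{d+1}\Ext_R^i(K,N)=0$.''
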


\begin{proof}
Let $M,N$ be $R$-modules of finite length.
Note that $\syz^dM,\syz^dN$ belong to $\CM_0(R)$.

(1) We have $a\Tor_i^R(\syz^dM,\syz^dN)=0$ for every $i>0$, which implies $a\Tor_i^R(M,N)=0$ for every $i>2d$.
Now let $n>4d$ be an integer.
Then $a\Tor_i^R(M,N)=0$ for all integers $i$ with $n-2d\le i\le n$.
It follows from Corollary \ref{22d}(1) that $a^{2^{2d}}\Tor_n^R(X,Y)=0$ for all $R$-modules $X,Y$.

(2) Fix an integer $i>0$.
We have $a\Ext_R^i(K,L)=0$ for all $K,L\in\CM_0(R)$.
For each integer $j\ge0$ there is an exact sequence $0 \to \syz^{j+1}N \to F_j \to \syz^jN \to 0$ such that $F_j$ is free.
Since $F_j$ and $\syz^dN$ belong to $\CM_0(R)$, we have $a\Ext_R^i(K,F_j)=a\Ext_R^i(K,\syz^dN)=0$.
An exact sequence $\Ext_R^i(K,F_j) \to \Ext_R^i(K,\syz^jN) \to \Ext_R^{i+1}(K,\syz^{j+1}N)$ is induced, and an inductive argument shows that $a^{d+1}\Ext_R^i(K,N)=0$.
(Note that in general an exact sequence $A\xrightarrow{\alpha}B\xrightarrow{\beta}C$ yields $\Ann A\cdot\Ann C\subseteq\Ann B$.)
Letting $K:=\syz^dM$, we observe that $a^{d+1}\Ext_R^h(M,N)=0$ for every $h>d$.
Corollary \ref{22d}(2) yields $(a^{d+1})^{2^{2d}}\Ext_R^h(X,Y)=0$ for all $h>d$ and $X,Y\in\mod R$.
\end{proof}

Now we can prove that the annihilators of $\Tor,\Ext$ on $\CM_0(R)$ are contained in all prime ideals in the singular locus of $R$.

\begin{prop}\label{5.1}
\begin{enumerate}[\rm(1)]
\item
Let $R$ be a Cohen-Macaulay local ring.
Then one has
$$
\Sing R\subseteq
\V(\Ann\Tor(\CM_0(R),\CM_0(R)))\cap
\V(\Ann\Ext(\CM_0(R),\CM_0(R))).
$$
\item
Let $R$ be a Gorenstein local ring.
Then
$$
\Sing R\subseteq\V(\Ann\lhom(\lCM_0(R),\lCM_0(R))).
$$
\end{enumerate}
\end{prop}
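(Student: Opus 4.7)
The plan is to deduce (1) from Proposition \ref{4d} by localizing at a prime of the singular locus and exploiting that a nonregular local ring has residue field of infinite projective dimension. Part (2) is then immediate from the Ext half of (1) via the earlier remark that, for a shift-closed subcategory of $\lCM(R)$, $\Ann\lhom$ coincides with $\Ann\Ext$ on the underlying $\mod R$-objects. I do not foresee a real obstacle: the delicate induction transferring annihilation from $\CM_0(R)$ to arbitrary modules has already been absorbed into Proposition \ref{4d}.

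For the Tor inclusion in (1), I argue by contraposition. Fix $\p \in \Sing R$ and let $a \in \Ann\Tor(\CM_0(R), \CM_0(R))$; the goal is to show $a \in \p$. By Proposition \ref{4d}(1), $a^{2^{2d}}$ annihilates $\Tor_i^R(M,N)$ for every $i > 4d$ and all finitely generated modules $M, N$. Specialize to $M = N = R/\p$ and localize at $\p$: the standard isomorphism $\Tor_i^R(R/\p, R/\p)_\p \cong \Tor_i^{R_\p}(k(\p), k(\p))$ together with the nonregularity of $R_\p$ shows that the right-hand side is nonzero for every $i \ge 0$. Choose any $i > 4d$. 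The localized equation $a^{2^{2d}} \Tor_i^{R_\p}(k(\p), k(\p)) = 0$ would, if $a$ were a unit in $R_\p$, force $\Tor_i^{R_\p}(k(\p), k(\p)) = 0$, a contradiction. Hence $a \in \p$.

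The Ext inclusion in (1) is parallel, using Proposition \ref{4d}(2) to obtain $a^{2^{2d}(d+1)} \Ext_R^i(M, N) = 0$ for $i > d$ in place of the Tor vanishing, and using that $\Ext_{R_\p}^i(k(\p), k(\p)) \neq 0$ for arbitrarily large $i$ whenever $R_\p$ is not regular (this is the usual Ext-characterization of regularity via infinite projective dimension of the residue field).

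For (2), since $R$ is Gorenstein, $\lCM_0(R)$ is thick in $\lCM(R)$ and in particular closed under the shift $\sus$. The $\Ann\lhom$-versus-$\Ann\Ext$ remark recalled above therefore yields
$$\Ann\lhom(\lCM_0(R), \lCM_0(R)) = \Ann\Ext(\CM_0(R), \CM_0(R)),$$
so the required containment is exactly the Ext case of (1).
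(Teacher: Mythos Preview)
Your proof is correct and follows essentially the same route as the paper: both invoke Proposition~\ref{4d} to transfer the annihilator to $\Tor_i^R(R/\p,R/\p)$ (respectively $\Ext$), localize at $\p$, and use nonregularity of $R_\p$ to force $a\in\p$; part (2) is likewise reduced to the $\Ext$ case of (1) via the identification $\Ann\lhom(\lCM_0(R),\lCM_0(R))=\Ann\Ext(\CM_0(R),\CM_0(R))$.
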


\begin{proof}
(1) Let $\p$ be any prime ideal in $\Sing R$.
Take an element $a\in\Ann\Tor(\CM_0(R),\CM_0(R))$.
Then, by Proposition \ref{4d}(1) we have $a^{2^{2d}}\Tor_i^R(R/\p,R/\p)=0$ for $i>4d$.
Localization at $\p$ shows that $a^{2^{2d}}\Tor_i^{R_\p}(\kappa(\p),\kappa(\p))=0$ for $i>4d$.
If $a$ is not in $\p$, then $a^{2^{2d}}$ is a unit in $R_\p$, and it follows that $\Tor_i^{R_\p}(\kappa(\p),\kappa(\p))=0$ for $i>4d$.
This is impossible since the local ring $R_\p$ is nonregular, and thus $a\in\p$.
The assertion for $\Ext$ is also proved analogously.

(2) Since $\Ann\lhom(\lCM_0(R),\lCM_0(R))$ coincides with $\Ann\Ext(\CM_0(R),\CM_0(R))$, the assertion follows from (1).
\end{proof}

\begin{rem}
In some results such as Propositions \ref{5.1} and \ref{4.4} the stable category versions are given, because they are used in the proofs of the main results of this paper.
We can also give the stable category versions of other results such as Propositions \ref{4d} and \ref{cep}, but do not, just because they are not necessary to prove our main results.
\end{rem}

Let $R$ be a complete equicharacteristic local ring with residue field $k$.
Let $A$ be a {\em Noether normalization} of $R$, that is, a formal power series subring $k[[x_1,\dots,x_d]]$, where $x_1,\dots,x_d$ is a system of parameters of $R$.
Let $R^e=R\otimes_AR$ be the enveloping algebra of $R$ over $A$.
Define a map $\mu:R^e\to R$ by $\mu(x\otimes y)=xy$ for $x,y\in R$, and put $\NN^R_A=\mu(\Ann_{R^e}\Ker\mu)$.
Then $\NN^R_A$ is an ideal of $R$, which is called the {\em Noether different} of $R$ over $A$.
We denote by $\NN^R$ the sum of $\NN^R_A$, where $A$ runs through the Noether normalizations of $R$.
Under a mild assumption, we can substantially refine the statement for $\Ext$ in the previous proposition:

\begin{prop}\label{cep}
Let $R$ be a Cohen-Macaulay complete equicharacteristic local ring with perfect residue field.
Then $\V(\Ann\Ext(\X,\Y))=\Sing R$ for all $\CM_0(R)\subseteq\X\subseteq\CM(R)$ and $\CM_0(R)\subseteq\Y\subseteq\mod R$.
\end{prop}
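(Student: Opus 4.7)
The plan is to prove the two inclusions separately. The inclusion $\Sing R \subseteq \V(\Ann\Ext(\X,\Y))$ is the easy one: since $\CM_0(R) \subseteq \X$ and $\CM_0(R) \subseteq \Y$, the monotonicity observation in Remark preceding Proposition \ref{a^2} gives
\[
\V(\Ann\Ext(\CM_0(R),\CM_0(R))) \subseteq \V(\Ann\Ext(\X,\Y)),
\]
and the left-hand side contains $\Sing R$ by Proposition \ref{5.1}(1). So this direction needs nothing new.

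For the reverse inclusion, my plan is to show that the Noether different $\NN^R$ is contained in $\Ann\Ext(\X,\Y)$. Combined with the classical identity $\V(\NN^R) = \Sing R$, valid for Cohen-Macaulay complete equicharacteristic local rings with perfect residue field (this is the content of the fact that $R$ is generically \'etale over any Noether normalization once the regular locus is known to be open and nonempty), this will give $\V(\Ann\Ext(\X,\Y)) \subseteq \V(\NN^R) = \Sing R$.

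To show $\NN^R \subseteq \Ann\Ext(\X,\Y)$, I would fix a Noether normalization $A = k[[x_1,\dots,x_d]] \hookrightarrow R$ and argue that $\NN^R_A$ annihilates $\Ext_R^i(M,N)$ for every $M \in \X$, every $N \in \Y$, and every $i > 0$. The key observation is that any $M \in \X \subseteq \CM(R)$ is maximal Cohen-Macaulay over the regular ring $A$ as well, hence $A$-free. Given $A$-freeness of $M$, the standard argument with the enveloping algebra applies: the canonical exact sequence $0 \to \Ker\mu \to R^e \to R \to 0$, tensored appropriately and combined with the $R^e$-module structure on $\Ext_R^i(M,N)$ arising from the two different $R$-actions through $\mu$, shows that every element of $\Ann_{R^e}\Ker\mu$ acts as zero on $\Ext_R^i(M,N)$ for $i \geq 1$; projecting through $\mu$ then gives $\NN^R_A \cdot \Ext_R^i(M,N) = 0$. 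Taking the sum over all Noether normalizations $A$ yields $\NN^R \subseteq \Ann\Ext(\X,\Y)$, as desired.

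The main obstacle is the annihilation statement $\NN^R_A \cdot \Ext_R^i(M,N) = 0$ for $A$-free $M$ and $i \geq 1$. It is not hard per se, but requires being careful about the identification of the two $R$-module structures on $\Ext_R^i(M,N)$ (viewing $M \otimes_A R$ as an $R^e$-module through $A$-freeness) and the fact that the $R$-action used in the statement is precisely the one obtained by restricting along $\mu \colon R^e \to R$. Once this is in place, the equality $\V(\NN^R) = \Sing R$ is a citation to standard theory and the theorem follows immediately.
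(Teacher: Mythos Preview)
Your proposal is correct and follows essentially the same route as the paper: both prove $\Sing R\subseteq\V(\Ann\Ext(\X,\Y))$ via Proposition~\ref{5.1}(1) and monotonicity, and both obtain the reverse inclusion from $\NN^R\subseteq\Ann\Ext(\CM(R),\mod R)$ together with $\V(\NN^R)=\Sing R$. The only difference is that the paper outsources these two facts to citations (Wang \cite[Corollary 5.13]{W} for the annihilation, Yoshino \cite[Lemma (6.12)]{Y} for $\V(\NN^R)=\Sing R$), whereas you sketch the annihilation argument directly.
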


\begin{proof}
We have the inclusions below, the first of which follows from Proposition \ref{5.1}(1).
\begin{align*}
\Sing R
& \subseteq\V(\Ann\Ext(\CM_0(R),\CM_0(R)))\\
& \subseteq\V(\Ann\Ext(\X,\Y))\subseteq\V(\Ann\Ext(\CM(R),\mod R)).
\end{align*}
By virtue of \cite[Corollary 5.13]{W}, the module $\Ext(\CM(R),\mod R)$ is annihilated by the ideal $\NN^R$.
Hence $\V(\Ann\Ext(\CM(R),\mod R))$ is contained in $\V(\NN^R)$.
On the other hand, it follows from \cite[Lemma (6.12)]{Y} that $\V(\NN^R)$ coincides with $\Sing R$.
\end{proof}

In the rest of this section, we study over an arbitrary local ring how the sets $\V(\Ann\Tor(\X,\Y))$ and $\V(\Ann\Ext(\X,\Y))$ vary as $\X,\Y$ move subcategories of $\mod R$.

\begin{prop}\label{doko}
Let $R$ be a local ring of dimension $d$, and let $\X$ be a subcategory of $\mod R$.
Then one has
\begin{align*}
& \V(\Ann\Tor(\X,\syz^d\mod R))\subseteq\V(\Ann\Tor(\X,\fl R))\subseteq\V(\Ann\Tor(\X,\mod R)),\\
& \V(\Ann\Ext(\X,\fl R))=\V(\Ann\Ext(\X,\mod R)),\\
& \V(\Ann\Ext(\fl R,\X))=\V(\Ann\Ext(\mod R,\X)),
\end{align*}
where $\syz^d\mod R$ denotes the subcategory of $\mod R$ consisting of the $d$-th syzygies of modules in $\X$.
\end{prop}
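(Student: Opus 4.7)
All three easy directions come for free from $\fl R\subseteq\mod R$, which yields $\Ann\Tor(\X,\mod R)\subseteq\Ann\Tor(\X,\fl R)$ and the analogous containments for $\Ext$ in either slot; taking $\V(-)$ produces the second $\Tor$ inclusion and the ``$\supseteq$'' halves of both $\Ext$ equalities. The substance of the proposition is the reverse direction, and my plan is to deduce it from Proposition \ref{a^2} by iterating that result $d$ times, so that each application extends the class of modules being annihilated by one dimension at the cost of squaring the annihilating element.

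For the two $\Ext$ equalities, given $a\in\Ann\Ext(\X,\fl R)$ I would fix $X\in\X$ and observe that $a\Ext_R^n(X,N)=a\Ext_R^{n+1}(X,N)=0$ for every $n\ge1$ and every $N\in\fl R$. Proposition \ref{a^2}(2) with $t=0$ then gives $a^{2}\Ext_R^n(X,N)=0$ for all $n\ge1$ and all $N$ of dimension at most $1$. The shape of the hypothesis (annihilation at two consecutive indices, all $n\ge 1$) is preserved under iteration, so after $d$ steps one has $a^{2^d}\Ext_R^n(X,N)=0$ for all $n\ge1$ and $N\in\mod R$; that is, $a^{2^d}\in\Ann\Ext(\X,\mod R)$, which is exactly the radical containment needed. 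Replacing Proposition \ref{a^2}(2) with (3) handles the third equality in the opposite variable verbatim.

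The first $\Tor$ inclusion is the only one that is not entirely mechanical, and the $\syz^d\mod R$ on the left-hand side is precisely what compensates for the subtlety I anticipate as the main obstacle. Proposition \ref{a^2}(1) demands annihilation of two consecutive Tors $\Tor_n$ and $\Tor_{n-1}$, but $\Tor_0$ is excluded from $\Ann\Tor(\X,-)$, so each iteration must drop the lowest allowed index by one. Starting from $a\in\Ann\Tor(\X,\fl R)$, after $k$ iterations one has $a^{2^k}\Tor_n^R(X,N)=0$ for all $n\ge k+1$, $X\in\X$, and $N$ with $\dim N\le k$; after $d$ iterations this reads $a^{2^d}\Tor_n^R(X,N)=0$ for every $n>d$ and every $N\in\mod R$. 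To close the loop I would invoke the standard shifting isomorphism $\Tor_i^R(X,\syz^dM)\cong\Tor_{i+d}^R(X,M)$ for $i\ge 1$, which places every relevant index in the safe range $>d$ and yields $a^{2^d}\in\Ann\Tor(\X,\syz^d\mod R)$. This index-shift step is the main point where care is needed; beyond it the argument is a uniform bootstrap along the dimension filtration.
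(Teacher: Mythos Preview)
Your proposal is correct and follows essentially the same route as the paper: iterate Proposition \ref{a^2} $d$ times (exactly the ``similar argument to the proof of Corollary \ref{22d}'' invoked there) to pass from $\fl R$ to $\mod R$ at the cost of raising the annihilating element to the $2^d$-th power, then for the $\Tor$ inclusion absorb the index loss via the dimension shift $\Tor_i^R(X,\syz^dM)\cong\Tor_{i+d}^R(X,M)$; the paper leaves this last step implicit, but it is the same mechanism.
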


\begin{proof}
Let $a,b,c\in R$ be elements such that $a\Tor_i^R(X,M)=b\Ext_R^i(X,M)=c\Ext_R^i(M,X)=0$ for all $i>0$, $X\in\X$ and $M\in\fl R$.
Then a similar argument to the proof of Corollary \ref{22d} shows that $a^{2^d}\Tor_i^R(X,M)=b^{2^d}\Ext_R^j(X,M)=c^{2^d}\Ext_R^j(M,X)=0$ for all $i>d$, $j>0$, $X\in\X$ and $M\in\mod R$.
Hence we have
\begin{align*}
\Ann\Tor(\X,\fl R) & \subseteq\sqrt{\Ann\Tor(\X,\syz^d\mod R)},\\
\Ann\Ext(\X,\fl R) & \subseteq\sqrt{\Ann\Ext(\X,\mod R)},\\
\Ann\Ext(\fl R,\X) & \subseteq\sqrt{\Ann\Ext(\mod R,\X)}.
\end{align*}
Therefore $\V(\Ann\Tor(\X,\syz^d\mod R))$, $\V(\Ann\Ext(\X,\mod R))$ and $\V(\Ann\Ext(\mod R,\X))$ are contained in $\V(\Ann\Tor(\X,\fl R))$, $\V(\Ann\Ext(\X,\fl R))$ and $\V(\Ann\Ext(\fl R,\X))$, respectively.
The other inclusion relations are straightforward.
\end{proof}

\begin{cor}
Let $R$ be a local ring.
The sets $\V(\Ann\Ext(\X,\Y))$ are constant over the subcategories $\X,\Y$ of $\mod R$ containing $\fl R$.
\end{cor}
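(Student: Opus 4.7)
My plan is to deduce this corollary directly from the two equalities
\[
\V(\Ann\Ext(\X,\fl R))=\V(\Ann\Ext(\X,\mod R)),\qquad \V(\Ann\Ext(\fl R,\X))=\V(\Ann\Ext(\mod R,\X))
\]
already established in Proposition \ref{doko}, combined with the elementary monotonicity principle recorded in the first remark of this section: for subcategories $\X\subseteq\X'$ and $\Y\subseteq\Y'$ of $\mod R$, one has $\V(\Ann\Ext(\X,\Y))\subseteq\V(\Ann\Ext(\X',\Y'))$. Since every subcategory $\X,\Y$ under consideration satisfies $\fl R\subseteq\X,\Y\subseteq\mod R$, this monotonicity alone yields a sandwich, and Proposition \ref{doko} collapses it.

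More precisely, I would first fix an arbitrary subcategory $\Y$ of $\mod R$ containing $\fl R$ and apply monotonicity in the first argument to the inclusions $\fl R\subseteq\X\subseteq\mod R$, obtaining
\[
\V(\Ann\Ext(\fl R,\Y))\subseteq\V(\Ann\Ext(\X,\Y))\subseteq\V(\Ann\Ext(\mod R,\Y)).
\]
The second equality of Proposition \ref{doko}, applied with the category in the second slot, forces the outer terms to coincide, so $\V(\Ann\Ext(\X,\Y))=\V(\Ann\Ext(\fl R,\Y))$ for every such $\X$. Next, with $\X$ now regarded as fixed (or simply taking $\X=\fl R$), I would apply the same monotonicity in the second argument to $\fl R\subseteq\Y\subseteq\mod R$, together with the first equality of Proposition \ref{doko}, to conclude $\V(\Ann\Ext(\fl R,\Y))=\V(\Ann\Ext(\fl R,\fl R))$.

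Combining these two identifications gives
\[
\V(\Ann\Ext(\X,\Y))=\V(\Ann\Ext(\fl R,\fl R))
\]
for every pair $\X,\Y$ with $\fl R\subseteq\X,\Y\subseteq\mod R$, which is exactly the constancy claim. There is no real obstacle here — the work is entirely encoded in Proposition \ref{doko}, and the proof amounts to a two-step sandwich; the only thing to be careful about is the direction reversal between annihilators and their closed subsets of $\Spec R$, so that the inclusions $\fl R\subseteq\X\subseteq\mod R$ are applied in the correct slot at each stage.
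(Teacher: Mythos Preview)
Your proof is correct and follows essentially the same approach as the paper's: a two-step sandwich using monotonicity together with the two equalities of Proposition \ref{doko}. The only cosmetic difference is the order of the two steps---the paper first collapses the second slot and then the first, landing at $\V(\Ann\Ext(\mod R,\mod R))$, whereas you collapse the first slot and then the second, landing at $\V(\Ann\Ext(\fl R,\fl R))$; both are valid and yield the same constant set.
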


\begin{proof}
First, since $\fl R\subseteq\Y\subseteq\mod R$, we have $\V(\Ann\Ext(\X,\fl R))\subseteq\V(\Ann\Ext(\X,\Y))\subseteq\V(\Ann\Ext(\X,\mod R))$.
Proposition \ref{doko} implies that the left and right ends are equal.
Next, the inclusions $\fl R\subseteq\X\subseteq\mod R$ imply $\V(\Ann\Ext(\fl R,\mod R))\subseteq\V(\Ann\Ext(\X,\mod R))\subseteq\V(\Ann\Ext(\mod R,\mod R))$, where the left and right ends coincide by Proposition \ref{doko}.
Thus, we obtain $\V(\Ann\Ext(\X,\Y))=\V(\Ann\Ext(\X,\mod R))=\V(\Ann\Ext(\mod R,\mod R))$.
\end{proof}

\section{Nonfree loci and stable supports of subcategories}

This section mainly investigates the nonfree loci of subcategories of $\CM(R)$ and the stable supports of subcategories of $\lCM(R)$, using the results obtained in the previous section.
First of all, let us study relationships of nonfree loci and stable supports with supports and annhilators of $\Tor$, $\Ext$ and $\lhom$.

\begin{prop}\label{4.4}
\begin{enumerate}[\rm(1)]
\item
For an $R$-module $M$ one has equalities
\begin{align*}
\NF(M) 
& = \Supp\Tor(M,\mod R) = \V(\Ann\Tor(M,\mod R))\\
& = \Supp\Ext(M,\mod R) = \V(\Ann\Ext(M,\mod R)).
\end{align*}
\item
Let $R$ be a $d$-dimensional Gorenstein ring.
For an object $M\in\lCM(R)$ one has
$$
\lSupp M = \Supp\lhom(M,\lCM(R)) = \V(\Ann\lhom(M,\lCM(R))).
$$
\end{enumerate}
\end{prop}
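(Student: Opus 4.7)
The plan is to prove (1) by identifying all five sets with $\NF(M)$ and then deduce (2) from (1) via the translation between $\lhom$ and $\Ext$ recorded in Section~4. For (1), I first establish $\Supp\Tor(M,\mod R) = \Supp\Ext(M,\mod R) = \NF(M)$. The inclusions into $\NF(M)$ are immediate from the local freeness criterion: if $M_\p$ is $R_\p$-free, then $\Tor_i^{R_\p}(M_\p,-)$ and $\Ext^i_{R_\p}(M_\p,-)$ vanish for every $i>0$. For the reverse inclusions I exhibit witnesses: if $\p\in\NF(M)$, then $\Tor_1^R(M,R/\p)_\p = \Tor_1^{R_\p}(M_\p,\kappa(\p))\neq 0$, and the syzygy sequence $0\to\syz M\to F\to M\to 0$ cannot split after localization at $\p$, producing a nonzero class in $\Ext_R^1(M,\syz M)_\p$. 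The inclusions $\Supp(\cdot)\subseteq\V(\Ann(\cdot))$ are formal.

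The crux is the reverse inclusion $\V(\Ann\Ext(M,\mod R))\cup\V(\Ann\Tor(M,\mod R))\subseteq\NF(M)$, which I prove contrapositively by a factorization argument. Suppose $\p\notin\NF(M)$, and fix a surjection $\pi\colon F\twoheadrightarrow M$ with $F$ free of finite rank. Since $M_\p$ is free, $\pi_\p$ splits via some $\sigma_\p\colon M_\p\to F_\p$. Finite generation of $M$ and $F$ gives $\Hom_{R_\p}(M_\p,F_\p)=\Hom_R(M,F)_\p$, so $\sigma_\p = \sigma/s$ for some $\sigma\colon M\to F$ and $s\in R\setminus\p$. The map $\pi\sigma - s\cdot 1_M\colon M\to M$ vanishes at $\p$, so its finitely generated image in $M$ is killed by some $t\in R\setminus\p$. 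Then $\pi(t\sigma)=ts\cdot 1_M$ in $\Hom_R(M,M)$, which shows that multiplication by $a:=ts\notin\p$ factors through the projective module $F$. Any endomorphism of $M$ factoring through a projective induces the zero map on $\Ext_R^i(M,N)$ and $\Tor_i^R(M,N)$ for all $i\ge 1$ and all $N$; hence $a$ annihilates $\Ext(M,\mod R)$ and $\Tor(M,\mod R)$, producing the required element of $\bigl(\Ann\Ext(M,\mod R)\cap\Ann\Tor(M,\mod R)\bigr)\setminus\p$.

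For (2), I combine the equality $\lSupp M = \NF(M)$ (from the $\NF$--$\lSupp$ remark in Section~2) with the translations $\Ann\lhom(M,\lCM(R))=\Ann\Ext(M,\CM(R))$ and $\Supp\lhom(M,\lCM(R))=\Supp\Ext(M,\CM(R))$, which hold because $\lCM(R)$ is closed under the shift $\sus$ (Remark following the notations for $\Tor$, $\Ext$, $\lhom$). Since $\syz M\in\CM(R)$ (syzygies of maximal Cohen-Macaulay modules are maximal Cohen-Macaulay), the sandwich
\[
\NF(M) = \Supp\Ext_R^1(M,\syz M) \subseteq \Supp\Ext(M,\CM(R)) \subseteq \Supp\Ext(M,\mod R) = \NF(M)
\]
pins the middle term to $\NF(M)$, and the identical chain with $\V(\Ann(\cdot))$ in place of $\Supp(\cdot)$ concludes using part~(1). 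The main obstacle is the factorization argument in the second paragraph; everything else is either a local computation or a direct appeal to part~(1).
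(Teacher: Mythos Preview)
Your proof is correct, but the crux step in (1) differs from the paper's argument. To show $\V(\Ann\Tor(M,\mod R))\subseteq\NF(M)$, the paper takes $\p\notin\NF(M)$, lifts the isomorphism $M_\p\cong R_\p^{n}$ to a four-term exact sequence $0\to K\to M\to R^n\to C\to 0$ over $R$ with $K_\p=C_\p=0$, picks $x\in R\setminus\p$ killing $K$ and $C$, and reads off from the resulting long exact sequences that $x^2$ annihilates every $\Tor_i^R(M,N)$ (and analogously every $\Ext_R^i(M,N)$). Your argument instead produces $a\in R\setminus\p$ such that $a\cdot 1_M$ factors through a free module, which kills $\Tor_{>0}$ and $\Ext^{>0}$ in one stroke by functoriality. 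Your route is slightly cleaner and treats $\Tor$ and $\Ext$ simultaneously; the paper's route is more elementary in that it avoids the $\Hom$--localization identification and works purely with annihilators of kernels and cokernels. Both yield the same conclusion with comparable effort. For (2), your sandwich argument using $\syz M\in\CM(R)$ is essentially the same as the paper's, which instead invokes $\Ext_R^i(M,N)\cong\Ext_R^{i+d}(M,\syz^d N)$ to identify $\Ext(M,\mod R)$ with $\Ext(M,\CM(R))$ directly.
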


\begin{proof}
(1) We prove $\NF(M)=\Supp\Tor(M,\mod R)=\V(\Ann\Tor(M,\mod R))$.
First, let $\p\in\NF(M)$.
Then we have $\Tor_1^R(M,R/\p)_\p=\Tor_1^{R_\p}(M_\p,\kappa(\p))\ne0$.
Hence $\NF(M)$ is contained in $\Supp\Tor(M,\mod R)$, which is contained in $\V(\Ann\Tor(M,\mod R))$.
Next, let $\p\in\V(\Ann\Tor(M,\mod R))$, and suppose $\p\notin\NF(M)$.
Then $M_\p\cong R_\p^{\oplus n}$ for some $n\ge0$, which gives an exact sequence $0 \to K \to M \xrightarrow{f} R^n \to C \to 0$ such that $K_\p=0=C_\p$.
Choose $x\in R\setminus\p$ with $xK=0=xC$.
Taking the image $L$ of $f$, we have an exact sequence
\begin{equation}\label{tttt}
\Tor_i^R(K,N) \to \Tor_i^R(M,N) \to \Tor_i^R(L,N)=\Tor_{i+1}^R(C,N)
\end{equation}
for each $i>0$ and $N\in\mod R$.
Since $x$ annihilates $\Tor_i^R(K,N)$ and $\Tor_{i+1}^R(C,N)$, the element $x^2$ annihilates $\Tor_i^R(M,N)$.
This means that $x^2$ belongs to $\Ann\Tor(M,\mod R)$, which is contained in $\p$.
Thus $x$ belongs to $\p$, which is a contradiction.
Consequently, $\V(\Ann\Tor(M,\mod R))$ is contained in $\NF(M)$, and we conclude $\NF(M)=\Supp\Tor(M,\mod R)=\V(\Ann\Tor(M,\mod R))$.
Along the same lines as in the above, one can prove the equalities $\NF(M)=\Supp\Ext(M,\mod R)=\V(\Ann\Ext(M,\mod R))$, using the following instead of \eqref{tttt}:
$$
\Ext^{i+1}_R(C,N)=\Ext^i_R(L,N) \to \Ext^i_R(M,N) \to \Ext^i_R(K,N).
$$

(2) We have $\lSupp M=\NF(M)=\Supp\Ext(M,\mod R)=\V(\Ann\Ext(M,\mod R))$ by (1).
Since $R$ is a Gorenstein ring and $M$ is a Cohen-Macaulay module, the isomorphism $\Ext_R^i(M,N)\cong\Ext_R^{i+d}(M,\syz^dN)$ holds for all $i>0$ and $N\in\mod R$.
Note here that $\syz^dN$ is Cohen-Macaulay.
Now it is easy to observe that the equalities $\Supp\Ext(M,\mod R)=\Supp\Ext(M,\CM(R))=\Supp\lhom(M,\lCM(R))$ and $\V(\Ann\Ext(M,\mod R))=\V(\Ann\Ext(M,\CM(R)))=\V(\Ann\lhom(M,\lCM(R)))$ hold.
\end{proof}

\begin{rem}
\begin{enumerate}[(1)]
\item
Let $M$ be an $R$-module.
Take an ideal $I$ of $R$ with $\NF(M)=\V(I)$.
Then Proposition \ref{4.4}(1) implies that there exists an integer $h>0$ such that $I^h$ annihilates $\Tor_i^R(M,X)$ and $\Ext_R^i(M,X)$ for all $i>0$ and $X\in\mod R$.
This is a generalization of \cite[Lemma 4.3]{DV}.
\item
One has $\NF(\X)=\Supp\Ext(\X,\Y)$ for all subcategories $\X,\Y$ of $\mod R$ with $\syz\X\subseteq\Y$.
In fact, it is obvious that $\NF(\X)$ contains $\Supp\Ext(\X,\Y)$, and the opposite inclusion relation is obtained by the fact that $\NF(X)=\Supp\Ext_R^1(X,\syz X)$ for each $R$-module $X$ (cf. \cite[Proposition 2.10]{res}).
The equality $\NF(M) = \Supp\Ext(M,\mod R)$ in Proposition \ref{4.4} is also a consequence of this statement.
\end{enumerate}
\end{rem}

Here we need to inspect the annihilators of $\Tor,\Ext,\lhom$ of balls:

\begin{lem}\label{hos}
\begin{enumerate}[\rm(1)]
\item
Let $\X,\Y$ be subcategories of $\mod R$ and $n\ge0$ an integer.
Then:
\begin{align*}
& (\Ann\Tor(\X,\Y))^n\subseteq
\left\{
\begin{gathered}
\Ann\Tor([\X]_n,\Y)\\
\Ann\Tor(\X,[\Y]_n)
\end{gathered}
\right\}
\subseteq\Ann\Tor(\X,\Y),\\
& (\Ann\Ext(\X,\Y))^n\subseteq
\left\{
\begin{gathered}
\Ann\Ext([\X]_n,\Y)\\
\Ann\Ext(\X,[\Y]_n)
\end{gathered}
\right\}
\subseteq\Ann\Ext(\X,\Y).
\end{align*}
In particular, $\V(\Ann\Tor(\X,\Y))=\V(\Ann\Tor([\X]_n,\Y))=\V(\Ann\Tor(\X,[\Y]_n))$ and $\V(\Ann\Ext(\X,\Y))=\V(\Ann\Ext([\X]_n,\Y))=\V(\Ann\Ext(\X,[\Y]_n))$ hold.
\item
Suppose that $R$ is Gorenstein of finite Krull dimension.
Let $\X,\Y$ be subcategories of $\lCM(R)$ and $n\ge0$ an integer.
Then there are inclusions
\begin{align*}
& (\Ann\lhom(\X,\Y))^n\subseteq
\left\{
\begin{gathered}
\Ann\lhom(\langle\X\rangle_n,\Y)\\
\Ann\lhom(\X,\langle\Y\rangle_n)
\end{gathered}
\right\}
\subseteq\Ann\lhom(\X,\Y).
\end{align*}
Hence $\V(\Ann\lhom(\X,\Y))=\V(\Ann\lhom(\langle\X\rangle_n,\Y))=\V(\Ann\lhom(\X,\langle\Y\rangle_n))$.
\end{enumerate}
\end{lem}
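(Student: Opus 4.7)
The plan is to establish each pair of inclusions by induction on $n$. The outer right-hand containments express monotonicity of $\Ann$ under the category inclusions $\X\subseteq[\X]_n$ and $\Y\subseteq[\Y]_n$ (and their triangulated counterparts $\X\subseteq\langle\X\rangle_n$, $\Y\subseteq\langle\Y\rangle_n$), which hold by construction of the ball, so they require no work. For the left-hand containments, the workhorse will be the elementary observation that a three-term exact sequence or exact triangle $A\to M\to B$ applied to a cohomological functor $F$ forces $\Ann F(A)\cdot\Ann F(B)\subseteq\Ann F(M)$.

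For the base case $n=1$ of part~(1), I would take $a\in\Ann\Tor(\X,\Y)$ and show $a$ still annihilates $\Tor_i^R(M,Y)$ for every $M\in[\X]$, $Y\in\Y$, $i\ge1$. Using the presentation $[\X]=\add(\proj R\cup\{\syz^jX:j\ge0,\,X\in\X\})$, I would verify each generating operation: higher $\Tor$ against projectives vanishes; the dimension-shift $\Tor_i^R(\syz^jX,Y)\cong\Tor_{i+j}^R(X,Y)$ for $i\ge1$, $j\ge0$ propagates the hypothesis through syzygies; additivity of $\Tor$ handles finite direct sums; and a summand of an $a$-killed module is $a$-killed. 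The right-variable case is symmetric, and the $\Ext$ case is identical via the corresponding dimension-shift $\Ext_R^i(\syz^jX,Y)\cong\Ext_R^{i+j}(X,Y)$.

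For the inductive step, I would unravel $[\X]_n=[[\X]_{n-1}\circ[\X]]$: any $M\in[\X]_n$ is a direct summand of some $N$ sitting in a short exact sequence $0\to A\to N\to B\to 0$ with $A\in[\X]_{n-1}$ and $B\in[\X]$. The long exact Tor sequence combined with the inductive hypothesis $(\Ann\Tor(\X,\Y))^{n-1}\subseteq\Ann\Tor(A,Y)$ and the base-case statement $\Ann\Tor(\X,\Y)\subseteq\Ann\Tor(B,Y)$ then yields $(\Ann\Tor(\X,\Y))^n\subseteq\Ann\Tor(N,Y)\subseteq\Ann\Tor(M,Y)$. The $\Y$-variable is symmetric, the $\Ext$ case is parallel via the long exact Ext sequence, and part~(2) proceeds the same way with $[\X]_n$, $\circ$, and short exact sequences replaced by $\langle\X\rangle_n$, $\ast$, and exact triangles in $\lCM(R)$. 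The ``In particular'' statements about $\V(\Ann\cdots)$ then follow by taking radicals on both sides.

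The hard part will be the base case of part~(2): since $\langle\X\rangle$ is closed under shifts in both directions, one must verify that $a\cdot\lhom(X,Y)=0$ for $X\in\X$, $Y\in\Y$ propagates to $a\cdot\lhom(\sus^iX,Y)=0$ for every $i\in\ZZ$, even though shifted copies of $Y$ need not lie in $\Y$. I would use the adjunction $\lhom(\sus^iX,Y)\cong\lhom(X,\sus^{-i}Y)$ together with the Gorenstein identifications $\lhom(X,\sus^jY)\cong\Ext_R^j(X,Y)$ for $j\ge1$ in order to re-express each such group as either a stable hom already handled or an Ext group that can be controlled by the dimension-shift trick used in part~(1), essentially borrowing the translation of Remark~4.2(3) to bring the triangulated statement into the abelian framework.
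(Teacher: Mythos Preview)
Your treatment of part~(1) is correct and is essentially the paper's argument: both prove the inductive inclusion $\Ann\Tor([\X]_{n-1},\Y)\cdot\Ann\Tor(\X,\Y)\subseteq\Ann\Tor([\X]_n,\Y)$ by feeding the defining short exact sequence into the long exact Tor sequence, with your write-up making the base case $\Ann\Tor(\X,\Y)=\Ann\Tor([\X],\Y)$ more explicit than the paper does.

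You are right to isolate the base case of part~(2) as the crux, but the fix you propose cannot work---and in fact the inclusion $\Ann\lhom(\X,\Y)\subseteq\Ann\lhom(\langle\X\rangle,\Y)$ is false without an extra hypothesis. Over $R=k[[x,y]]/(xy)$ take $\X$ to consist of $R/(x)$ and $\Y$ of $R/(y)$: one has $\Hom_R(R/(x),R/(y))=0$, so $\Ann\lhom(\X,\Y)=R$, yet $\syz(R/(y))\cong R/(x)$ puts $R/(y)\in\langle\X\rangle$ and $\lhom_R(R/(y),R/(y))\cong k$ forces $\Ann\lhom(\langle\X\rangle,\Y)\subseteq\m$. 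Your plan to pass to $\Ext$ via $\lhom(X,\sus^jY)\cong\Ext_R^j(X,Y)$ and then invoke the dimension-shift trick of part~(1) stalls because the hypothesis $a\in\Ann\lhom(\X,\Y)$ controls only the degree-zero stable Hom, not any $\Ext^j$ with $j>0$; in part~(1) the shift works precisely because $a\in\Ann\Ext(\X,\Y)$ already kills every positive degree. The remark you cite itself carries the hypothesis that $\X$ or $\Y$ be shift-closed, and that is exactly what is missing here. The paper's ``shown similarly'' glosses over the same issue; the applications in Propositions~\ref{4.5} and~\ref{ky} go through because one may first enlarge the second variable to all of $\lCM(R)$, which is shift-closed, after which your base-case argument via $\lhom(\sus^iX,Y)\cong\lhom(X,\sus^{-i}Y)$ is immediate.
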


\begin{proof}
Let us only prove the inclusions $(\Ann\Tor(\X,\Y))^n\subseteq\Ann\Tor([\X]_n,\Y)\subseteq\Ann\Tor(\X,\Y)$; the other inclusions can be shown similarly.
It is clear that the second inclusion holds.
As for the first inclusion, it suffices to show that
$$
\Ann\Tor([\X]_n,\Y)\supseteq\Ann\Tor([\X]_{n-1},\Y)\cdot\Ann\Tor(\X,\Y)
$$
holds for any $n\ge1$.
Take elements $a\in\Ann\Tor([\X]_{n-1},\Y)$ and $b\in\Ann\Tor(\X,\Y)$.
Fix $i>0$, $Z\in[\X]_n$ and $Y\in\Y$.
Then there exists an exact sequence $0 \to L \to M \to N \to 0$ with $L\in[\X]_{n-1}$ and $N\in[\X]$ such that $Z$ is a direct summand of $M$.
This induces an exact sequence $\Tor_i^R(L,Y) \to \Tor_i^R(M,Y) \to \Tor_i^R(N,Y)$.
Since $a$ and $b$ annihilate $\Tor_i^R(L,Y)$ and $\Tor_i^R(N,Y)$ respectively, the element $ab$ annihilates $\Tor_i^R(M,Y)$.
\end{proof}

We state restriction made by finiteness of dimension.
The next result says that finite dimensional subcategories of $\CM(R)$ and $\lCM(R)$ containing the Cohen-Macaulay modules that are locally free on $\Spec_0(R)$ define the biggest nonfree loci and stable supports.

\begin{prop}\label{4.5}
Let $R$ be a Cohen-Macaulay local ring.
\begin{enumerate}[\rm(1)]
\item
Let $\X$ be a subcategory of $\CM(R)$ containing $\CM_0(R)$.
If $\X$ has finite dimension, then $\NF(\X)=\Sing R$.
\item
Suppose that $R$ is Gorenstein.
Let $\X$ be a subcategory of $\lCM(R)$ containing $\lCM_0(R)$.
If $\X$ has finite dimension, then $\lSupp\X=\Sing R$.
\end{enumerate}
\end{prop}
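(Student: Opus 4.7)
The plan is to combine the ``propagation'' result from Section 4 --- Proposition \ref{5.1}, which embeds $\Sing R$ into $\V(\Ann\Ext(\CM_0(R),\CM_0(R)))$ and its stable analogue --- with the multiplicativity of annihilators along a ball from Lemma \ref{hos}, and finally translate the resulting annihilator inclusion into a statement about the nonfree locus (respectively, the stable support) via Proposition \ref{4.4}. The easy inclusions $\NF(\X)\subseteq\Sing R$ and $\lSupp\X\subseteq\Sing R$ are already recorded in the preliminaries (since $\X\subseteq\CM(R)$ or $\X\subseteq\lCM(R)$ and $\NF(\CM(R))=\Sing R$), so all the work goes into the reverse inclusions.

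For (1), I would use finite-dimensionality to fix $G\in\X$ and $n\ge 0$ with $\X=[G]_{n+1}$. Applying Lemma \ref{hos}(1) in both variables,
\[
\V(\Ann\Ext(\X,\X)) = \V(\Ann\Ext([G]_{n+1},\X)) = \V(\Ann\Ext(G,\X)).
\]
Since $\CM_0(R)\subseteq\X$, the left-hand side contains $\V(\Ann\Ext(\CM_0(R),\CM_0(R)))$, which in turn contains $\Sing R$ by Proposition \ref{5.1}(1). On the other end of the equality, $\V(\Ann\Ext(G,\X))\subseteq\V(\Ann\Ext(G,\mod R))=\NF(G)\subseteq\NF(\X)$ by Proposition \ref{4.4}(1). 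Chaining these gives $\Sing R\subseteq\NF(\X)$.

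For (2), the argument is entirely parallel, working with $\lhom$ in place of $\Ext$. With $\X=\langle G\rangle_{n+1}$, Lemma \ref{hos}(2) yields $\V(\Ann\lhom(\X,\X))=\V(\Ann\lhom(G,\X))$; the inclusion $\lCM_0(R)\subseteq\X$ combined with Proposition \ref{5.1}(2) shows this set contains $\Sing R$; and Proposition \ref{4.4}(2) gives $\V(\Ann\lhom(G,\X))\subseteq\V(\Ann\lhom(G,\lCM(R)))=\lSupp G\subseteq\lSupp\X$, whence $\Sing R\subseteq\lSupp\X$.

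I do not expect a serious obstacle here: the substantive content has already been packaged into the preceding lemmas in Section 4, and the proof reduces to a bookkeeping chain of containments. The one point that deserves care is the role of finite-dimensionality --- it is what allows the passage from the ``bilinear'' annihilator $\Ann\Ext(\X,\X)$ (respectively $\Ann\lhom(\X,\X)$) to the ``linear'' annihilator $\Ann\Ext(G,\mod R)$ (respectively $\Ann\lhom(G,\lCM(R))$), which by Proposition \ref{4.4} cuts out precisely $\NF(G)$ (respectively $\lSupp G$). The polynomial bound in Lemma \ref{hos} guarantees this passage loses no radical information, and this is the only place where the hypothesis $\dim\X<\infty$ is used.
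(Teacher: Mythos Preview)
Your proof is correct and follows essentially the same route as the paper: fix $G$ with $\X=[G]_{n+1}$, use Lemma~\ref{hos} to collapse $\V(\Ann\Ext(\X,-))$ to $\V(\Ann\Ext(G,-))$, invoke Proposition~\ref{4.4} to identify the latter with $\NF(G)$, and feed in $\Sing R$ via Proposition~\ref{5.1}. The only cosmetic difference is that the paper works with $\V(\Ann\Ext(\X,\mod R))$ as the intermediary (rather than $\V(\Ann\Ext(\X,\X))$) and records along the way the slightly stronger equality $\NF(\X)=\NF(G)$ by a direct localization argument; your chain of inclusions uses only the trivial containment $\NF(G)\subseteq\NF(\X)$, which is all that is needed here.
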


\begin{proof}
(1) Setting $\dim\X=n$, we find a module $G\in\X$ with $\X=[G]_{n+1}$.
We have
$$
\NF(\X)\overset{\rm(a)}{=}\NF(G)\overset{\rm(b)}{=}\V(\Ann\Ext(G,\mod R))\overset{\rm(c)}{=}\V(\Ann\Ext(\X,\mod R))\overset{\rm(d)}{=}\Sing R,
$$
where each equality follows from the following observation:\\
(a) Let $\p$ be a prime ideal not in $\NF(G)$.
Then $G_\p$ is $R_\p$-free.
For each $M\in\X$, we have $M_\p\in[G_\p]_{n+1}$, which implies that $M_\p$ is $R_\p$-free.
Hence $\p$ is not in $\NF(\X)$.\\
(b) This is obtained by Proposition \ref{4.4}.\\
(c) This follows from Lemma \ref{hos}.\\
(d) We have $\V(\Ann\Ext(\X,\mod R))=\NF(\X)\subseteq\Sing R$ by (a), (b) and (c).
Proposition \ref{5.1} shows that $\Sing R$ is contained in $\V(\Ann\Ext(\CM_0(R),\CM_0(R)))$, which is contained in $\V(\Ann\Ext(\X,\mod R))$ since $\CM_0(R)\subseteq\X\subseteq\mod R$.

(2) This statement is shown by an analogous argument to (1).
\end{proof}

Here, recall that a local ring $(R,\m)$ is called a {\em hypersurface} if the $\m$-adic completion of $R$ is a residue ring of a complete regular local ring by a principal ideal.
Also, recall that a Cohen-Macaulay local ring $R$ is said to have {\em minimal multiplicity} if the equality $\e(R)=\edim R-\dim R+1$ holds, where $\e(R)$ denotes the multiplicity of $R$ and $\edim R$ denotes the embedding dimension of $R$.

By the preceding proposition, finiteness of the dimension of a subcategory $\X$ implies
$$
\NF(\X)=\Sing R.
$$
Now we are interested in how this condition is close to the condition that $\X=\CM(R)$.
In fact, it turns out by some results in \cite{stcm,crs} that these two conditions are equivalent under certain assumptions.
We state this here, and by combining it with a result obtained in the previous section we give a criterion for a resolving subcategory to coincide with $\CM(R)$ in terms of the support and annihilator of $\Ext$.

\begin{prop}\label{3cond}
Let $(R,\m,k)$ be a Cohen-Macaulay local ring of dimension $d$.
Let $\X$ be a resolving subcategory of $\mod R$ contained in $\CM(R)$.
Suppose that one of the following three conditions is satisfied.
\begin{itemize}
\item
$R$ is a hypersurface.
\item
$R$ is locally a hypersurface on $\Spec_0(R)$, and $\X$ contains $\syz^dk$.
\item
$R$ is locally with minimal multiplicity on $\Spec_0(R)$, and $\X$ contains $\syz^dk$.
\item
$R$ is excellent and locally of finite Cohen-Macaulay representation type on $\Spec_0(R)$, and $\X$ contains $\syz^dk$ and a dualizing $R$-module.
\end{itemize}
Then the following statements hold.
\begin{enumerate}[\rm(1)]
\item
One has $\X=\NF_{\CM}^{-1}(\NF(\X))$.
Hence, $\X=\CM(R)$ if and only if $\NF(\X)=\Sing R$.
\item
Assume that $R$ is complete and equicharacteristic and that $k$ is perfect.
Then $\X=\CM(R)$ if and only if $\Supp\Ext(\X,\Y)=\V(\Ann\Ext(\X,\Y))$ for some subcategory $\Y$ of $\mod R$ containing $\CM_0(R)$.
\end{enumerate}
\end{prop}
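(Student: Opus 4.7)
The plan is to derive both assertions from the classification theorems for resolving subcategories proved in \cite{stcm} and \cite{crs}, combined with Propositions \ref{4.4} and \ref{cep}. For Part (1), the key input from prior work is that under each of the four hypotheses the assignment $\X\mapsto\NF(\X)$ sets up a bijection between the pertinent resolving subcategories of $\CM(R)$ and the specialization-closed subsets of $\Sing R$, with inverse $W\mapsto\NF_{\CM}^{-1}(W)$: the hypersurface case is handled in \cite{stcm}, and the remaining three cases in \cite{crs}. Granted this classification, the equality $\X=\NF_{\CM}^{-1}(\NF(\X))$ is automatic. Since $\NF(\CM(R))=\Sing R$ (Remark 2.6(5)), we have $\CM(R)=\NF_{\CM}^{-1}(\Sing R)$, so the injectivity of $\NF_{\CM}^{-1}$ on specialization-closed subsets of $\Sing R$ forces $\X=\CM(R)$ exactly when $\NF(\X)=\Sing R$.

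For the forward direction of Part (2), I would take $\Y=\mod R$, which trivially contains $\CM_0(R)$. Since $\syz\CM(R)\subseteq\CM(R)\subseteq\mod R$, the remark following Proposition \ref{4.4} yields $\Supp\Ext(\CM(R),\mod R)=\NF(\CM(R))=\Sing R$. On the other hand, Proposition \ref{cep} applies because $R$ is complete, equicharacteristic and has perfect residue field, giving $\V(\Ann\Ext(\CM(R),\mod R))=\Sing R$. Hence the two sets coincide.

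For the reverse direction, suppose $\Supp\Ext(\X,\Y)=\V(\Ann\Ext(\X,\Y))$ for some subcategory $\Y\supseteq\CM_0(R)$. By Part (1) it suffices to verify $\NF(\X)=\Sing R$. Under the hypotheses of the second, third, and fourth bullets the assumption $\syz^d k\in\X$ combined with Remark 2.11(3) forces $\X\supseteq\CM_0(R)$; for the hypersurface bullet the same inclusion may be assumed without loss (the trivial case $\X=\add R$ being excluded, since then both sides are empty and the claim is vacuous unless $R$ is regular). With $\CM_0(R)\subseteq\X\subseteq\CM(R)$ and $\CM_0(R)\subseteq\Y\subseteq\mod R$, Proposition \ref{cep} gives $\V(\Ann\Ext(\X,\Y))=\Sing R$, hence by assumption $\Supp\Ext(\X,\Y)=\Sing R$ as well. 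Since any $\p\in\Supp\Ext(\X,\Y)$ satisfies $\Ext^i_{R_\p}(X_\p,Y_\p)\ne 0$ for some $X\in\X$, $Y\in\Y$, $i>0$, the module $X_\p$ cannot be $R_\p$-free, so $\Supp\Ext(\X,\Y)\subseteq\NF(\X)\subseteq\Sing R$. This sandwiches $\NF(\X)=\Sing R$, and Part (1) finishes the argument.

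The main obstacle is not so much in this proof itself, which is essentially a bookkeeping exercise, but in the classification results from \cite{stcm,crs} that underlie Part (1); those supply the crucial rigidity that a resolving subcategory satisfying one of the four local hypotheses is determined by its nonfree locus. The rest of the argument consists of stitching together Proposition \ref{4.4} (to identify $\Supp\Ext$ with $\NF$) and Proposition \ref{cep} (to identify $\V(\Ann\Ext)$ with $\Sing R$), with the trivial case $\X=\add R$ being the only genuinely subtle point.
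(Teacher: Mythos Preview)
Your argument follows the paper's own proof closely. Part (1) is handled identically, by appeal to the classification theorems in \cite{stcm} and \cite{crs}. For the forward direction of Part (2) you invoke the remark after Proposition \ref{4.4} to obtain $\Supp\Ext(\CM(R),\mod R)=\NF(\CM(R))=\Sing R$ in one stroke; the paper instead picks $\p\in\Sing R$, sets $M=\syz^d(R/\p)$, and uses $\NF(M)=\Supp\Ext_R^1(M,\syz M)$ to exhibit $\p$ in the support. This is the same idea, just packaged differently.

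In the reverse direction of Part (2) you are in fact more scrupulous than the paper, which applies Proposition \ref{cep} without verifying its hypothesis $\CM_0(R)\subseteq\X$. You correctly note that under bullets two through four this follows from $\syz^dk\in\X$, and that under the hypersurface bullet, if $\X\ne\add R$, then Part (1) gives $\m\in\NF(\X)$ and hence $\CM_0(R)=\NF_{\CM}^{-1}(\{\m\})\subseteq\NF_{\CM}^{-1}(\NF(\X))=\X$. The one point that is not right is your dismissal of the case $\X=\add R$ as ``vacuous'': if $R$ is a non-regular hypersurface and $\X=\add R$, then $\Supp\Ext(\X,\Y)=\V(\Ann\Ext(\X,\Y))=\emptyset$ for any $\Y$, while $\X\ne\CM(R)$, so the biconditional in Part (2) literally fails. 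This is a flaw in the proposition as stated (observe it announces ``three conditions'' but lists four, suggesting imperfect editing) rather than in your reasoning; the paper's own proof does not address it either.
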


\begin{proof}
(1) The former assertion follows from \cite[Main Theorem]{stcm} and \cite[Theorem 5.6 and Corollary 6.12]{crs}.
The latter assertion is shown by the former.

(2) We have $\Supp\Ext(\CM(R),\mod R)\subseteq\V(\Ann\Ext(\CM(R),\mod R))=\Sing R$, where the equality follows from Proposition \ref{cep}.
Let $\p\in\Sing R$, and set $M=\syz^d(R/\p)$.
Then $M$ is Cohen-Macaulay, and $\p$ belongs to $\NF(M)$.
We have $\NF(M)=\Supp\Ext_R^1(M,\syz M)$ (cf. \cite[Proposition 2.10]{res}), which is contained in $\Supp\Ext(\CM(R),\mod R)$.
Thus we have $\Supp\Ext(\CM(R),\mod R)=\V(\Ann\Ext(\CM(R),\mod R))$, which shows the `only if' part.

Suppose that $\Supp\Ext(\X,\Y)=\V(\Ann\Ext(\X,\Y))$ for some subcategory $\Y$ of $\mod R$ containing $\CM_0(R)$.
It is evident that the inclusions $\Supp\Ext(\X,\Y)\subseteq\NF(\X)\subseteq\Sing R$ hold.
Proposition \ref{cep} yields the equality $\Sing R=\V(\Ann\Ext(\X,\Y))$.
Hence we have $\NF(\X)=\Sing R$.
It follows from (1) that $\X=\CM(R)$.
Thus the `if' part is proved.
\end{proof}

\begin{rem}
As to the equivalence in Proposition \ref{3cond}(1), the `only if' part always holds, but `if' part does not hold without the assumption on the punctured spectrum.
Let
$$
R=k[[x,y,z]]/(x^2,y^2z^2),
$$
where $k$ is a field.
This is a $1$-dimensional complete intersection local ring with $\Spec R=\Sing R=\{\p,\q,\m\}$, where $\p=(x,y),\q=(x,z),\m=(x,y,z)$.
Let
$$
\X=\res_R(\m\oplus R/(x)).
$$
(For an $R$-module $M$ we denote by $\res_RM$ the {\em resolving closure} of $M$, i.e., the smallest resolving subcategory of $\mod R$ containing $M$.)
Then $\X$ is a resolving subcategory of $\mod R$ contained in $\CM(R)$ and containing $\syz^1k=\m$.
We have $\NF(\X)=\NF(\m\oplus R/(x))=\Sing R$ by \cite[Corollary 3.6]{res}.
However, $\X$ does not coincide with $\CM(R)$; the Cohen-Macaulay $R$-module $R/\p$ does not belong to $\X$.
Indeed, assume $R/\p\in\X$.
Then $\kappa(\p)$ belongs to $\res_{R_\p}((\m\oplus R/(x))_\p)=\res_{R_\p}(R_\p/xR_\p)$ by \cite[Proposition 3.5]{res}.
Every object of $\res_{R_\p}(R_\p/xR_\p)$ is a Cohen-Macaulay $R_\p$-module whose complexity is at most that of $R_\p/xR_\p$ by \cite[Proposition 4.2.4]{Av}.
Since $R_\p\cong k[[x,y,z]]_{(x,y)}/(x^2,y^2)$, the complexity of the $R_\p$-module $R_\p/xR_\p$ is $1$.
This shows that $\kappa(\p)$ has complexity at most $1$, which cannot occur because $R_\p$ is not a hypersurface (cf. \cite[Remark 8.1.1(3)]{Av}).
Thus $R/\p$ is not in $\X$.
\end{rem}

\section{Dimensions of $\CM_0(R)$ and $\lCM_0(R)$}

In this section, we consider finiteness of the dimensions of $\CM_0(R)$ and $\lCM_0(R)$.
It will turn out that it is closely related to the condition that $R$ is an isolated singularity.
Let us begin with studying over a Cohen-Macaulay local ring $R$ the relationship between finiteness of the dimensions of subcategories of $\CM(R),\lCM(R)$ and the $\m$-primary property of the annihilator of $\Tor,\Ext,\lhom$ on them, where $\m$ is the maximal ideal of $R$.

\begin{prop}\label{ky}
Let $(R,\m,k)$ be a Cohen-Macaulay local ring of dimension $d$.
\begin{enumerate}[\rm(1)]
\item
\begin{enumerate}[\rm(a)]
\item
Let $\X$ be a subcategory of $\CM_0(R)$ with $\dim\X<\infty$.
Let $\Y$ be any subcategory of $\mod R$.
Then $\Ann\Tor(\X,\Y)$ and $\Ann\Ext(\X,\Y)$ are $\m$-primary.
\item
Suppose that $R$ is Gorenstein.
Let $\X$ be a subcategory of $\lCM_0(R)$ with $\dim\X<\infty$.
Let $\Y$ be any subcategory of $\lCM(R)$.
Then $\Ann\lhom(\X,\Y)$ is $\m$-primary.
\end{enumerate}
\item
\begin{enumerate}[\rm(a)]
\item
Let $\X$ be a resolving subcategory of $\mod R$ contained in $\CM(R)$ and containing $\syz^dk$.
Let $\Y$ be a subcategory of $\mod R$ containing $\X$.
If $\Ann\Ext(\X,\Y)$ is $\m$-primary, then $\dim\X<\infty$.
\item
Suppose that $R$ is Gorenstein.
Let $\X$ be a thick subcategory of $\lCM(R)$ containing $\syz^dk$.
Let $\Y$ be a subcategory of $\lCM(R)$ containing $\X$.
If $\Ann\lhom(\X,\Y)$ is $\m$-primary, then $\dim\X<\infty$.
\end{enumerate}
\end{enumerate}
\end{prop}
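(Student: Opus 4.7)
The proposition splits into two types: (1)(a) and (1)(b) are direct implications that follow quickly from the ball machinery of Lemma \ref{hos} combined with Proposition \ref{4.4}, while (2)(a) and (2)(b) are converses requiring us to construct a generator out of the annihilator hypothesis.

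For (1)(a), let $n = \dim\X < \infty$ and choose $G \in \X$ with $\X \subseteq [G]_{n+1}$. Since $G \in \CM_0(R)$, we have $\NF(G) \subseteq \{\m\}$, so by Proposition \ref{4.4}(1) both $\V(\Ann\Tor(G, \mod R))$ and $\V(\Ann\Ext(G, \mod R))$ are contained in $\{\m\}$, meaning that $\Ann\Tor(G, \mod R)$ and $\Ann\Ext(G, \mod R)$ are $\m$-primary (or equal to $R$). Restricting the second argument from $\mod R$ to $\Y$ only enlarges these annihilators, so they remain $\m$-primary, and Lemma \ref{hos}(1) yields
$$
(\Ann\Tor(G, \Y))^{n+1} \subseteq \Ann\Tor([G]_{n+1}, \Y) \subseteq \Ann\Tor(\X, \Y),
$$
together with the analogous chain for $\Ext$. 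Since a power of an $\m$-primary ideal is $\m$-primary, (1)(a) follows. Part (1)(b) is formally identical with $\lhom$ in place of $\Tor$ and $\Ext$, invoking Proposition \ref{4.4}(2) and Lemma \ref{hos}(2).

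For (2)(a) and (2)(b), fix $t$ with $\m^t$ contained in the given annihilator and take $G := \syz^d k$, which lies in $\X$ by the standing hypothesis together with the resolving (respectively thick) property. The goal is to produce a bound $r$ depending on $t$, $d$, and the minimal number of generators of $\m^t$, uniform in $M \in \X$, such that $\X \subseteq [G]_r$ (respectively $\X \subseteq \langle G \rangle_r$). The key factorization consequence of the hypothesis is: for any $M \in \X$ and $a \in \m^t$, since $\syz M \in \X \subseteq \Y$, the element $a$ annihilates $\Ext^1(M, \syz M)$ (respectively $\lhom(M, \syz M)$), so the pushout of the universal extension $0 \to \syz M \to F_0 \to M \to 0$ along multiplication by $a$ on $\syz M$ splits; equivalently, $a$ acts as zero in $\lEnd_R(\syz M)$.

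Converting this factorization into a uniform bound is the main obstacle. The plan is a Koszul/ghost-style construction in the spirit of Rouquier: choose generators $a_1, \dots, a_s$ of $\m^t$, and build an iterated ``Koszul object'' by stacking $s$ short exact sequences (respectively exact triangles) realizing multiplication by each $a_i$ on $G$, which then lies in $[G]_{s+1}$ (respectively $\langle G \rangle_{s+1}$). Using the factorization property above, one shows that every $M \in \X$ is a summand of an extension of this Koszul object by copies of $G$, yielding the desired uniform $r$. In (2)(b), where $R$ is Gorenstein and $\lCM(R)$ is triangulated, this is essentially Rouquier's argument applied to $\lCM(R)$. In (2)(a), lacking a triangulated structure, the construction must be realized via short exact sequences and $d$-fold syzygies, using that $d$-fold syzygies of finite-length modules lie in $\CM_0(R) = \res(\syz^d k)$ to handle intermediate steps uniformly.
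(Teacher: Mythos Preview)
Your argument for (1)(a) and (1)(b) is correct and essentially identical to the paper's: both combine Lemma~\ref{hos} with Proposition~\ref{4.4} to conclude that $\V(\Ann\Tor(\X,\Y))$ and $\V(\Ann\Ext(\X,\Y))$ are contained in $\NF(G)\subseteq\{\m\}$.

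For (2)(a), however, your Koszul/ghost plan has a genuine gap. Knowing that each $a_i\in\m^t$ acts as zero in $\lEnd_R(\syz M)$ tells you how $M$ relates to \emph{itself} under multiplication by $a_i$; it says nothing about how $M$ is built from $G=\syz^dk$. The iterated Koszul object you form by stacking cones of the $a_i$ on $G$ has no evident relationship to an arbitrary $M\in\X$, so the assertion that ``every $M\in\X$ is a summand of an extension of this Koszul object by copies of $G$'' is unsupported. In the triangulated setting the ghost lemma produces \emph{lower} bounds on dimension, not upper bounds, and there is no converse ghost lemma available here without additional hypotheses on $G$; so your appeal to ``essentially Rouquier's argument'' for (2)(b) does not go through either.

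The paper's route for (2)(a) is quite different and supplies precisely the missing bridge from $M$ to $\syz^dk$. One fixes a parameter ideal $Q\subseteq\m^h$ (where $\m^h$ lies in the annihilator), so that $Q\,\Ext_R^i(M,\syz^jM)=0$ for $1\le i,j\le d$ and every $M\in\X$. Then \cite[Proposition~2.2]{stcm} shows that $M$ is a direct summand of $\syz^d(M/QM)$. Since $R/Q$ is artinian with $\m^r(R/Q)=0$ for a fixed $r$ independent of $M$, the Loewy filtration of $M/QM$ has length at most $r$ with subquotients that are $k$-vector spaces; applying $\syz^d$ to this filtration gives $\syz^d(M/QM)\in[\syz^dk]_r$, hence $M\in[\syz^dk]_r$ uniformly. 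Your closing remark about ``$d$-fold syzygies of finite-length modules'' points in the right direction, but the decisive reduction step---realizing $M$ as a summand of $\syz^d(M/QM)$ via \cite[Proposition~2.2]{stcm}---is absent from your sketch, and nothing in the Koszul construction you describe replaces it. For (2)(b) the paper does not run a separate triangulated argument at all: it reduces to (2)(a) using $\Ann\lhom(\X,\Y)=\Ann\Ext(\overline\X,\overline\Y)$ (valid since $\X$ is closed under shifts) together with $\dim\X\le\dim\overline\X$ from Proposition~\ref{35}(2).
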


\begin{proof}
(1) We prove only the assertion on $\Tor$ in (a) because the other assertions are similarly shown.
Let $n=\dim\X$.
Then there exists a module $G\in\X$ such that $\X=[G]_{n+1}$.
Lemma \ref{hos} implies $\V(\Ann\Tor(\X,\Y))=\V(\Ann\Tor(G,\Y))$, which is contained in $\V(\Ann\Tor(G,\mod R))$.
On the other hand, one sees from Proposition \ref{4.4} that $\V(\Ann\Tor(G,\mod R))=\NF(G)$ holds, and $\NF(G)$ is contained in $\{\m\}$ since $G\in\CM_0(R)$.
Therefore we obtain $\V(\Ann\Tor(\X,\Y))\subseteq\{\m\}$, which shows that $\Ann\Tor(\X,\Y)$ is an $\m$-primary ideal of $R$.

(2) We prove only the statement (a) because (b) follows from (a) and Proposition \ref{35}(2).
For some integer $h>0$ the ideal $\m^h$ annihilates $\Ext(\X,\Y)$.
Take a parameter ideal $Q$ of $R$ contained in $\m^h$.
Fix a module $M$ in $\X$.
Then $\syz^jM$ is in $\X$ since $\X$ is resolving, and it is also in $\Y$.
Hence we have $Q\Ext_R^i(M,\syz^jM)=0$ for $1\le i,j\le d$.
Since $M$ is Cohen-Macaulay, we can apply \cite[Proposition 2.2]{stcm} to $M$, which shows that $M$ is isomorphic to a direct summand of $\syz^d(M/QM)$.
As the ring $R/Q$ is artinian, there exists an integer $r>0$ such that $\m^r(R/Q)=0$.
We have a filtration
$$
M/QM\supseteq\m(M/QM)\supseteq\m^2(M/QM)\cdots\supseteq\m^r(M/QM)=0
$$
of $R/Q$-submodules of $M/QM$.
Decompose this into exact sequences $0 \to \m^{i+1}(M/QM) \to \m^i(M/QM) \to k^{\oplus s_i} \to 0$, where $0\le i\le r-1$.
Taking the $d$-th syzygies, we obtain exact sequences
$$
0 \to \syz^d(\m^{i+1}(M/QM)) \to \syz^d(\m^i(M/QM))\oplus R^{\oplus t_i} \to (\syz^dk)^{\oplus s_i} \to 0.
$$
By induction on $i$, we observe that the $R$-module $\syz^d(M/QM)$ belongs to $[\syz^dk]_r$, and hence $M\in[\syz^dk]_r$.
Since $\X$ is resolving and contains $\syz^dk$, we have $\X=[\syz^dk]_r$.
(Note here that $r$ is independent of the choice of $M$.)
Therefore $\dim\X\le r-1<\infty$.
%
\end{proof}

Recall that $R$ is called an {\em isolated singularity} if the local ring $R_\p$ is regular for all $\p\in\Spec_0(R)$.
Recall also that the {\em annihilator} of an $R$-linear additive category $\C$ is defined as 
$
\bigcap_{M,N\in\C}\Ann_R\Hom_\C(M,N).
$
The following is the first main result of this paper, which is a characterization of the isolated singularity of $R$ in terms of the dimensions of $\CM_0(R)$ and $\lCM_0(R)$:

\begin{thm}\label{main}
Let $R$ be a Cohen-Macaulay local ring with maximal ideal $\m$.
\begin{enumerate}[\rm(1)]
\item
Set the following four conditions.
\begin{enumerate}[\rm(a)]
\item
The dimension of $\CM_0(R)$ is finite.
\item
The ideal $\Ann\Ext(\CM_0(R),\CM_0(R))$ is $\m$-primary.
\item
The ideal $\Ann\Tor(\CM_0(R),\CM_0(R))$ is $\m$-primary.
\item
The ring $R$ is an isolated singularity.
\end{enumerate}
Then, the implications ${\rm(a)}\Leftrightarrow{\rm(b)}\Rightarrow{\rm(c)}\Rightarrow{\rm(d)}$ hold.
The implication ${\rm(d)}\Rightarrow{\rm(a)}$ also holds if $R$ is complete, equicharacteristic and with perfect residue field.
\item
Suppose that $R$ is Gorenstein, and set the following three conditions.
\begin{enumerate}[\rm(a)]
\item
The dimension of the triangulated category $\lCM_0(R)$ is finite.
\item
The annihilator of the $R$-linear category $\lCM_0(R)$ is $\m$-primary.
\item
The ring $R$ is an isolated singularity.
\end{enumerate}
Then the implications ${\rm(a)}\Leftrightarrow{\rm(b)}\Rightarrow{\rm(c)}$ hold, and so does ${\rm(c)}\Rightarrow{\rm(a)}$ if $R$ is complete, equicharacteristic and with perfect residue field.
\end{enumerate}
\end{thm}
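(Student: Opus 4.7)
The plan is to realize this theorem as a direct assembly of Proposition \ref{ky}, Proposition \ref{5.1} and Proposition \ref{cep}. The pivotal observation is that $\CM_0(R)$ is a resolving subcategory of $\mod R$ contained in $\CM(R)$ and containing $\syz^d k$, and $\lCM_0(R)$ is a thick subcategory of $\lCM(R)$ containing $\syz^d k$; these are exactly the hypotheses required to invoke both directions of Proposition \ref{ky} with $\X = \Y = \CM_0(R)$ (respectively $\X = \Y = \lCM_0(R)$). The whole proof then amounts to pivoting between the dimension condition and the annihilator condition via Proposition \ref{ky}, and connecting the annihilator condition to $\Sing R$ via Proposition \ref{5.1} in the easy direction and Proposition \ref{cep} in the hard direction.

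For part (1) I would first derive the equivalence (a) $\Leftrightarrow$ (b) from Proposition \ref{ky}(1)(a) and (2)(a) applied with $\X = \Y = \CM_0(R)$. Then (b) $\Rightarrow$ (c) follows, having established (a), by a second appeal to Proposition \ref{ky}(1)(a), which simultaneously yields the $\m$-primary property for $\Ann \Tor$. The implication (c) $\Rightarrow$ (d) is immediate from Proposition \ref{5.1}(1): it gives $\Sing R \subseteq \V(\Ann \Tor(\CM_0(R),\CM_0(R)))$, and the right-hand side lies in $\{\m\}$ by (c). Finally, under the completeness, equicharacteristic and perfect residue field hypotheses, I would obtain (d) $\Rightarrow$ (b) by applying Proposition \ref{cep} with $\X = \Y = \CM_0(R)$: the proposition gives $\V(\Ann \Ext(\CM_0(R),\CM_0(R))) = \Sing R$, and under (d) this is contained in $\{\m\}$.

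For part (2) the argument runs in parallel using the $\lhom$ versions. The equivalence (a) $\Leftrightarrow$ (b) follows from Proposition \ref{ky}(1)(b) and (2)(b) applied with $\X = \Y = \lCM_0(R)$; here one uses that by definition the annihilator of the $R$-linear category $\lCM_0(R)$ is precisely $\Ann \lhom(\lCM_0(R),\lCM_0(R))$. The implication (b) $\Rightarrow$ (c) is immediate from Proposition \ref{5.1}(2). For the converse (c) $\Rightarrow$ (b) under the extra hypotheses, I would use that when $R$ is an isolated singularity every maximal Cohen-Macaulay $R$-module is locally free on the punctured spectrum, so $\CM_0(R) = \CM(R)$. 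Then Proposition \ref{cep} (applicable since $R$ Gorenstein is in particular Cohen-Macaulay) yields that $\Ann \Ext(\CM_0(R),\CM_0(R))$ is $\m$-primary, and by the remark in Section 4 identifying $\Ann \lhom(\X,\Y)$ with $\Ann \Ext(\overline{\X},\overline{\Y})$ when one factor is closed under shifts (and $\lCM_0(R)$ is shift-closed, being thick), this gives (b).

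The only genuinely substantive input is Proposition \ref{cep}, which supplies the deep direction (d) $\Rightarrow$ (a) in part (1) and (c) $\Rightarrow$ (a) in part (2) by bounding $\Ann \Ext(\CM(R),\mod R)$ from below by the Noether different $\NN^R$ and appealing to $\V(\NN^R) = \Sing R$. Every other implication is a routine translation between the dimension and annihilator languages via Proposition \ref{ky}, combined with the containment $\Sing R \subseteq \V(\Ann \Tor)$ furnished by Proposition \ref{5.1}; these present no real obstacle.
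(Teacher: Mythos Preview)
Your proposal is correct and matches the paper's approach exactly: the paper's proof consists of the single sentence ``Combining Propositions \ref{ky}, \ref{5.1} and \ref{cep} yields our first main result of this paper,'' and you have simply unpacked that combination in detail. The only superfluous step is the observation in part (2) that $\CM_0(R)=\CM(R)$ under the isolated-singularity hypothesis; Proposition \ref{cep} already applies with $\X=\Y=\CM_0(R)$ regardless, so you can pass directly to the $\m$-primary conclusion without it.
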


\begin{proof}
Combine Propositions  \ref{5.1}, \ref{cep} and \ref{ky}. 

\end{proof}

The celebrated Auslander-Huneke-Leuschke-Wiegand theorem states that every Cohen-Macaulay local ring $R$ of finite Cohen-Macaulay representation type is an isolated singularity.
This was proved by Auslander \cite[Theorem 10]{A} when $R$ is complete, by Leuschke and Wiegand \cite[Corollary 1.9]{LW} when $R$ is excellent, and by Huneke and Leuschke \cite[Corollary 2]{HL} in the general case.
Our Theorem \ref{main} not only deduces this result but also improves it as follows:

\begin{cor}
Let $R$ be a Cohen-Macaulay local ring.
Suppose that there are only finitely many isomorphism classes of indecomposable Cohen-Macaulay $R$-modules which are locally free on $\Spec_0(R)$.
Then $R$ is an isolated singularity, and hence $R$ has finite Cohen-Macaulay representation type.
\end{cor}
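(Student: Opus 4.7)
The plan is to deduce the corollary immediately from Theorem \ref{main}. By hypothesis, there are only finitely many isomorphism classes of indecomposable objects in $\CM_0(R)$; let $M_1,\dots,M_n$ be a complete set of representatives, and put $G=M_1\oplus\cdots\oplus M_n$. Since $\CM_0(R)$ is closed under direct summands (it is a resolving subcategory of $\mod R$), every object of $\CM_0(R)$ is a finite direct sum of copies of the $M_i$, so $\CM_0(R)=\add G=[G]_1$. Hence $\dim\CM_0(R)=0$, and in particular this dimension is finite; that is, condition (a) of Theorem \ref{main}(1) is satisfied.

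Theorem \ref{main}(1) then gives the implication (a)$\Rightarrow$(d), so $R$ is an isolated singularity. I want to emphasize that this implication of Theorem \ref{main}(1) is the one that holds unconditionally: the assumption that $R$ is complete, equicharacteristic, and with perfect residue field is only needed for the reverse direction (d)$\Rightarrow$(a). Thus no extra hypotheses on $R$ beyond Cohen-Macaulayness are required, which is exactly why this improves the classical Auslander--Huneke--Leuschke--Wiegand theorem.

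For the second assertion, it remains to upgrade ``finitely many indecomposables in $\CM_0(R)$'' to ``finite Cohen-Macaulay representation type.'' For this I use that over an isolated singularity one has the equality $\CM(R)=\CM_0(R)$: indeed, given $\p\in\Spec_0(R)$, the local ring $R_\p$ is regular, so if $M\in\CM(R)$ then $\depth M_\p\ge\dim R_\p=\depth R_\p$, and the Auslander--Buchsbaum formula forces the projective dimension of $M_\p$ over $R_\p$ to be zero, i.e., $M_\p$ is free. Combining this with the hypothesis yields that $\CM(R)$ itself has only finitely many indecomposable objects up to isomorphism, which is the definition of finite Cohen-Macaulay representation type.

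There is no real obstacle here: Theorem \ref{main} does the heavy lifting, and the only thing to check carefully is the trivial observation that a subcategory generated additively by finitely many objects has dimension zero, together with the standard fact that isolated singularities force every maximal Cohen-Macaulay module to be locally free on the punctured spectrum.
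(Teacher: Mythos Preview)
Your proof is correct and follows essentially the same route as the paper: exhibit $G=M_1\oplus\cdots\oplus M_n$, observe $\CM_0(R)=\add G=[G]_1$ so that $\dim\CM_0(R)=0$, and apply the implication (a)$\Rightarrow$(d) of Theorem \ref{main}(1). The paper leaves the final clause (``hence $R$ has finite Cohen-Macaulay representation type'') implicit, whereas you spell out the Auslander--Buchsbaum argument showing $\CM(R)=\CM_0(R)$ over an isolated singularity; this elaboration is correct and welcome.
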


\begin{proof}
Let $M_1,\dots,M_n$ be the nonisomorphic indecomposable Cohen-Macaulay $R$-modules which are locally free on $\Spec_0(R)$.
Then $\CM_0(R)$ contains $M:=M_1\oplus\cdots\oplus M_n$.
Since $\CM_0(R)$ is resolving, it also contains $[M]$.
On the other hand, take $N\in\CM_0(R)$.
Then each indecomposable summand of $N$ also belongs to $\CM_0(R)$, so it is isomorphic to one of $M_1,\dots,M_n$.
Hence $N$ is in $\add M$.
Therefore we have $\CM_0(R)=\add M=[M]$.
This implies $\dim\CM_0(R)=0<\infty$, and the assertion follows from Theorem \ref{main}(1).
\end{proof}

Our Theorem \ref{main} also gives rise to finiteness of the dimensions of $\CM(R)$ and $\lCM(R)$ as a direct consequence, the latter of which is nothing but \cite[Corollary 5.3]{ddc}.

\begin{cor}\label{comp}
Let $R$ be a Cohen-Macaulay equicharacteristic complete local ring with perfect residue field.
Suppose that $R$ is an isolated singularity.
Then $\CM(R)$ has finite dimension.
If $R$ is Gorenstein, $\lCM(R)$ has finite dimension as a triangulated category.
\end{cor}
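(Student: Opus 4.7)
The plan is to deduce the corollary directly from Theorem \ref{main} by observing that, under the isolated singularity hypothesis, the smaller categories $\CM_0(R)$ and $\lCM_0(R)$ coincide with the full categories $\CM(R)$ and $\lCM(R)$. Once this identification is in place, the two assertions of the corollary become literally the implications ${\rm(d)}\Rightarrow{\rm(a)}$ of Theorem \ref{main}(1) and ${\rm(c)}\Rightarrow{\rm(a)}$ of Theorem \ref{main}(2), applied to the complete equicharacteristic ring $R$ with perfect residue field.

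First I would verify the equality $\CM(R)=\CM_0(R)$. Let $M\in\CM(R)$ and let $\p\in\Spec_0(R)$. Since $R$ is Cohen-Macaulay and $M$ is maximal Cohen-Macaulay, localizing gives $\depth_{R_\p}M_\p\ge\dim R_\p$, and the reverse inequality holds automatically, so $M_\p$ is a maximal Cohen-Macaulay $R_\p$-module. Because $R$ is an isolated singularity, $R_\p$ is regular, so by the Auslander--Buchsbaum formula the finite-length projective dimension of $M_\p$ equals $\dim R_\p-\depth_{R_\p}M_\p=0$; thus $M_\p$ is free. This shows $\NF(M)\subseteq\{\m\}$, i.e.\ $M\in\CM_0(R)$. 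The opposite inclusion is tautological, so $\CM(R)=\CM_0(R)$, and passing to stable categories gives $\lCM(R)=\lCM_0(R)$ in the Gorenstein case.

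Next I apply Theorem \ref{main}. Since $R$ is complete, equicharacteristic, Cohen-Macaulay with perfect residue field, and is an isolated singularity, condition (d) of Theorem \ref{main}(1) holds; hence condition (a) holds, i.e.\ $\dim\CM_0(R)<\infty$. By the identification above, $\dim\CM(R)<\infty$. In the Gorenstein case, the same hypotheses make condition (c) of Theorem \ref{main}(2) hold, so condition (a) of that theorem yields $\dim\lCM_0(R)<\infty$ as a triangulated category, which gives $\dim\lCM(R)<\infty$.

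There is essentially no obstacle here beyond recognizing the reduction: the only technical point is the observation that maximal Cohen-Macaulay modules over a regular local ring are free, which is standard. The substantive content is packaged entirely inside Theorem \ref{main}, and this corollary merely records the most natural special case.
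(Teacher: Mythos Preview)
Your proposal is correct and matches the paper's approach: the paper states that Corollary \ref{comp} is a direct consequence of Theorem \ref{main}, and your argument carries out precisely this reduction by identifying $\CM_0(R)$ with $\CM(R)$ (and $\lCM_0(R)$ with $\lCM(R)$) under the isolated singularity hypothesis.
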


\begin{rem}\label{prfhos}
In Corollary \ref{comp}, one can replace the assumption that $R$ is complete with the weaker assumption that $R$ is excellent, using a similar argument to the proof of \cite[Theorem 5.8]{ddc}.
(It is proved in \cite[Theorem 5.8]{ddc} that the latter statement in Corollary \ref{comp} holds true even if $R$ is not complete but excellent.)

Indeed, since the completion $\widehat R$ of $R$ is still an isolated singularity, Corollary \ref{comp} implies that $\CM(\widehat R)$ has finite dimension.
Putting $n=\dim\CM(\widehat R)$, we have $\CM(\widehat R)=[C]_{n+1}$ for some Cohen-Macaulay $\widehat R$-module $C$.
It follows from \cite[Corollary 3.6]{stcm} that there exists a Cohen-Macaulay $R$-module $G$ such that $C$ is isomorphic to a direct summand of the completion $\widehat G$ of $G$, and hence we have $\CM(\widehat R)=[\widehat G]_{n+1}$.
Now we claim the following.

\begin{claim*}
Let $m>0$.
For any $N\in[\widehat G]_m$ there exists $M\in[G]_m$ such that $N$ is isomorphic to a direct summand of $\widehat M$.
\end{claim*}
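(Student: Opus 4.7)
The plan is to proceed by induction on $m\ge 1$. For the base case $m=1$, note that $[\widehat G]_1=\add_{\widehat R}(\proj\widehat R\cup\{\syz_{\widehat R}^i\widehat G:i\ge 0\})$. Since $\widehat R$ is flat over $R$ and completion preserves minimal generating sets over a local ring, one has $\syz_{\widehat R}^i\widehat G\cong\widehat{\syz^i_R G}$. Any $N\in[\widehat G]_1$ is therefore a direct summand of a finite sum $\widehat R^{a_0}\oplus\bigoplus_i(\syz_{\widehat R}^i\widehat G)^{a_i}\cong\widehat M$, where $M:=R^{a_0}\oplus\bigoplus_i(\syz^i_R G)^{a_i}$ lies in $[G]_1$.

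For the inductive step, suppose the claim is known through $m-1$ and let $N\in[\widehat G]_m=[[\widehat G]_{m-1}\circ[\widehat G]]$. Unwrap the ball to obtain an exact sequence $0\to X\to L\to Y\to 0$ over $\widehat R$ with $X\in[\widehat G]_{m-1}$, $Y\in[\widehat G]$, and $N$ a direct summand of $L$. Applying the inductive hypothesis to $X$ and the base case to $Y$ yields $X_0\in[G]_{m-1}$ and $Y_0\in[G]$ with decompositions $\widehat{X_0}\cong X\oplus X'$ and $\widehat{Y_0}\cong Y\oplus Y'$. Summing the original sequence with the identity sequences on $X'$ and $Y'$ produces $0\to\widehat{X_0}\to W\to\widehat{Y_0}\to 0$, where $W:=L\oplus X'\oplus Y'$, and $N$ is still a direct summand of $W$.

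It remains to realize $W$ as a direct summand of $\widehat M$ for some $M\in[G]_m$. I would use the flat base change isomorphism $\Ext_{\widehat R}^1(\widehat{Y_0},\widehat{X_0})\cong\widehat R\otimes_R\Ext_R^1(Y_0,X_0)$ together with the finite generation of $\Ext_R^1(Y_0,X_0)$ over $R$: pick $R$-generators $\xi_1,\dots,\xi_n$ realized by $R$-extensions $0\to X_0\to C_i\to Y_0\to 0$ and set $M:=C_1\oplus\cdots\oplus C_n$, which lies in $[G]_{m-1}\bullet[G]=[G]_m$. Writing the class of $W$ as $\xi=\sum_i s_i\xi_i$ with $s_i\in\widehat R$, one sees that $W$ arises from $\widehat M$ by first pulling back the sequence $0\to\widehat{X_0}^n\to\widehat M\to\widehat{Y_0}^n\to 0$ along the $\widehat R$-linear map $\widehat{Y_0}\to\widehat{Y_0}^n$, $y\mapsto(s_1y,\dots,s_ny)$, and then pushing out along the sum map $\widehat{X_0}^n\to\widehat{X_0}$, $(x_1,\dots,x_n)\mapsto\sum_i s_ix_i$.

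The main obstacle is upgrading this pullback--pushout realization of $W$ into an honest direct-summand statement $W\mid\widehat{M''}$ for an appropriate enlargement $M''=M\oplus X_0^a\oplus Y_0^b\in[G]_m$. I expect to handle this by analyzing the natural maps that the pullback--pushout produces between $\widehat M$ and $W$, enlarging $M$ to absorb the relevant kernel and cokernel, and invoking Krull--Schmidt on finitely generated $\widehat R$-modules together with \cite[Corollary 3.6]{stcm}. The ball-theoretic bookkeeping is routine; this summand-extraction step is the crux of the proof.
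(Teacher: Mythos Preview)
Your setup through the enlarged sequence $0\to\widehat{X_0}\to W\to\widehat{Y_0}\to 0$ matches the paper exactly. The gap is in what follows: you try to descend via a spanning set of $\Ext_R^1(Y_0,X_0)$ and a pullback--pushout construction, and you yourself flag that the resulting ``summand-extraction step'' is unfinished. That step is genuinely nontrivial as you have set it up; the maps you describe need not split, so turning them into a direct-summand relation would require substantial further argument.

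The paper avoids this entirely by exploiting a hypothesis you have not used: the claim is stated inside Remark~\ref{prfhos}, where $R$ has (at most) an isolated singularity and $G$ is maximal Cohen--Macaulay. Hence $X_0,Y_0\in\CM(R)$, so $\Ext_R^1(Y_0,X_0)_\p=0$ for every nonmaximal prime $\p$, i.e.\ $\Ext_R^1(Y_0,X_0)$ has finite length. Your base-change isomorphism $\Ext_{\widehat R}^1(\widehat{Y_0},\widehat{X_0})\cong\widehat R\otimes_R\Ext_R^1(Y_0,X_0)$ then becomes $\Ext_{\widehat R}^1(\widehat{Y_0},\widehat{X_0})\cong\Ext_R^1(Y_0,X_0)$. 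Thus the class of the enlarged sequence is already the completion of an $R$-extension $0\to X_0\to M\to Y_0\to 0$, giving $\widehat M\cong W$ on the nose (not merely as a summand), with $M\in[G]_m$. No pullback--pushout manipulation is needed.
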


This claim is shown by induction on $m$.
When $m=1$, the module $N$ is isomorphic to a direct summand of a direct sum $\bigoplus_{i=1}^{h}\syz^{l_i}\widehat{G}$ for some $l_i\ge0$, and we can take $M=\bigoplus_{i=1}^{h}\syz^{l_i}G$.
Assume $m\ge 2$.
There is an exact sequence $\sigma: 0 \to X \to Z \to Y \to 0$ with $X\in[\widehat G]_{m-1}$ and $Y\in[\widehat G]$ such that $N$ is a direct summand of $Z$.
The induction hypothesis implies that there exist $V\in[G]_{m-1}$ and $W\in[G]$ such that $X$ and $Y$ are isomorphic to direct summands of $\widehat{V}$ and $\widehat{W}$, respectively.
We have isomorphisms $\widehat V\cong X\oplus X'$ and $\widehat W\cong Y\oplus Y'$, and get an exact sequence $\sigma':0 \to \widehat V \to Z' \to \widehat W \to 0$, where $Z'=X'\oplus Y'\oplus Z$.
Regard $\sigma'$ as an element of $\Ext_{\widehat R}^1(\widehat W,\widehat V)$.
We have $\Ext_{\widehat R}^1(\widehat W,\widehat V)\cong\Ext_R^1(W,V)^{\widehat{\ }}\cong\Ext_R^1(W,V)$, where the latter isomorphism follows from the fact that $\Ext_R^1(W,V)$ has finite length as an $R$-module.
This gives an exact sequence $\tau: 0 \to V \to M \to W \to 0$ such that $\widehat\tau\cong\sigma'$ as $\widehat R$-complexes.
Since $\widehat M\cong Z'=X'\oplus Y'\oplus Z$, the module $N$ is isomorphic to a direct summand of $\widehat M$.
As $M\in [G]_m$, the claim follows.

Let $X\in\CM(R)$.
Then the completion $\widehat X$ is in $\CM(\widehat R)=[\widehat G]_{n+1}$.
The claim implies that there exists $M\in[G]_{n+1}$ such that $\widehat{X}$ is isomorphic to a direct summand of $\widehat{M}$.
Hence $\widehat X\in\add_{\widehat R}(\widehat M)$.
It is seen by \cite[Lemma 5.7]{ddc} that $X$ is in $\add_RM$, whence $X\in[G]_{n+1}$.
\end{rem}

\section{Dimensions of more general resolving and thick subcategories}

In the preceding section, we studied finiteness of the dimensions of the resolving subcategory $\CM_0(R)$ of $\mod R$ and the thick subcategory $\lCM_0(R)$ of $\lCM(R)$.
The aim of this section is to investigate finiteness of the dimensions of more general resolving subcategories of $\mod R$ and thick subcategories of $\lCM(R)$.
We start by the following theorem.

\begin{thm}\label{mg}
Let $(R,\m,k)$ be a Cohen-Macaulay local ring.
Let $W$ be a specialization-closed subset of $\Spec R$ contained in $\Sing R$.
\begin{enumerate}[\rm(1)]
\item
Consider the following three conditions.\\
{\rm(a)} $\dim\NF_{\CM}^{-1}(W)<\infty$.
{\rm(b)} $W=(\emptyset\text{ or }\Sing R)$.
{\rm(c)} $\NF_{\CM}^{-1}(W)=(\add R\text{ or }\CM(R))$.\\
Then ${\rm(a)}\Rightarrow{\rm(b)}\Rightarrow{\rm(c)}$ hold.
\item
Let $R$ be Gorenstein.
Consider the following three conditions.\\
{\rm(a)} $\dim\lSupp^{-1}(W)<\infty$.
{\rm(b)} $W=(\emptyset\text{ or }\Sing R)$.
{\rm(c)} $\lSupp^{-1}(W)=(0\text{ or }\lCM(R))$.\\
Then ${\rm(a)}\Rightarrow{\rm(b)}\Rightarrow{\rm(c)}$ hold.
The implication ${\rm(c)}\Rightarrow{\rm(a)}$ also holds if $R$ is excellent and equicharacteristic and $k$ is perfect.
\end{enumerate}
\end{thm}

\begin{proof}
(1) As to the implication ${\rm(a)}\Rightarrow{\rm(b)}$, we may assume $W\ne\emptyset$.
Then $W$ contains $\m$, as $W$ is specialization-closed.
Hence we have $\CM_0(R)=\NF_{\CM}^{-1}(\{\m\})\subseteq\NF_{\CM}^{-1}(W)\subseteq\CM(R)$.
Proposition \ref{4.5} shows $\NF(\NF_{\CM}^{-1}(W))=\Sing R$.
Since $\NF(\NF_{\CM}^{-1}(W))$ is contained in $W$, we have $W=\Sing R$.
The implication ${\rm(b)}\Rightarrow{\rm(c)}$ is trivial.

(2) The proof of the implications ${\rm(a)}\Rightarrow{\rm(b)}\Rightarrow{\rm(c)}$ is similar to the corresponding implications in (1).
The implication ${\rm(c)}\Rightarrow{\rm(a)}$ is obtained by \cite[Theorem 5.8]{ddc}.
\end{proof}

In the rest of this section, we give several applications of our Theorem \ref{mg}.
First, we generalize some implications in Theorem \ref{main}.

\begin{cor}
Let $R$ be a Cohen-Macaulay local ring, and let $n$ be a nonnegative integer.
\begin{enumerate}[\rm(1)]
\item
If $\dim\CM_n(R)<\infty$, then $\dim\Sing R\le n$.
\item
Suppose that $R$ is Gorenstein.
If $\dim\lCM_n(R)<\infty$, then $\dim\Sing R\le n$.
The converse also holds if $R$ is excellent, equicharacteristic and with perfect residue field.
\end{enumerate}
\end{cor}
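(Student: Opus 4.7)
The plan is to reduce both parts of the corollary to Theorem \ref{mg} by means of the identification recorded in the preliminaries:
$$
\CM_n(R)=\NF_{\CM}^{-1}(W_n),\qquad \lCM_n(R)=\lSupp^{-1}(W_n),
$$
where $W_n=\{\,\p\in\Sing R\mid\dim R/\p\le n\,\}$ is a specialization-closed subset of $\Sing R$. Both parts then become statements about $W_n$.

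For part (1), I would apply Theorem \ref{mg}(1) to $W=W_n$. Finiteness of $\dim\CM_n(R)$ then forces $W_n$ to be either $\emptyset$ or all of $\Sing R$. If $W_n=\Sing R$, then every singular prime $\p$ satisfies $\dim R/\p\le n$ by definition of $W_n$, so $\dim\Sing R\le n$ at once. If instead $W_n=\emptyset$, I would argue that this forces $\Sing R=\emptyset$: since $n\ge 0$ and $\dim R/\m=0$, the maximal ideal would automatically belong to $W_n$ if it belonged to $\Sing R$; but $R$ is local, so $\m\notin\Sing R$ means $R$ is regular and hence $\Sing R=\emptyset$. Either way, $\dim\Sing R\le n$.

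For part (2), the forward implication runs along exactly the same lines, this time invoking $(a)\Rightarrow(b)$ of Theorem \ref{mg}(2) applied to $W=W_n$, with the identical case analysis. For the converse, I would suppose $R$ is excellent, equicharacteristic and with perfect residue field, Gorenstein, and $\dim\Sing R\le n$. The hypothesis $\dim\Sing R\le n$ gives $W_n=\Sing R$. Since every Cohen-Macaulay $R$-module $M$ satisfies $\lSupp M=\NF(M)\subseteq\Sing R$ by Remark (3) in Section 2, we obtain the identification $\lCM_n(R)=\lSupp^{-1}(\Sing R)=\lCM(R)$. Then the implication $(c)\Rightarrow(a)$ in Theorem \ref{mg}(2), whose validity under the present hypotheses is exactly the content of that theorem, yields $\dim\lCM_n(R)<\infty$.

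There is no real obstacle: once one has the description $\CM_n(R)=\NF_{\CM}^{-1}(W_n)$ and its stable analogue, everything reduces to a direct citation of Theorem \ref{mg}. The only subtle point is the bookkeeping in the $W_n=\emptyset$ case, where one has to use the assumption $n\ge 0$ together with the local structure of $R$ to conclude $\Sing R=\emptyset$ rather than merely obtaining a vacuous statement.
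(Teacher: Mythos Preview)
Your proposal is correct and follows essentially the same route as the paper: identify $\CM_n(R)$ and $\lCM_n(R)$ with $\NF_{\CM}^{-1}(W_n)$ and $\lSupp^{-1}(W_n)$ respectively, then invoke Theorem \ref{mg}. The only difference is that the paper disposes of the case analysis in one line by asserting that $W$ contains $\m$ (tacitly assuming $R$ is singular, the regular case being trivial), whereas you treat the possibility $W_n=\emptyset$ explicitly; your handling of that edge case is correct and arguably tidier.
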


\begin{proof}
Let $W$ be the set of prime ideals $\p\in\Sing R$ such that $\dim R/\p\le n$.
Then $W$ is a specialization-closed subset of $\Spec R$ contained in $\Sing R$.
Since $W$ contains $\m$, it is nonempty.

(1) Since $\CM_n(R)=\NF_{\CM}^{-1}(W)$, Theorem \ref{mg}(1) shows $W=\Sing R$, which implies the inequality $\dim\Sing R\le n$.

(2) As $\lCM_n(R)=\lSupp^{-1}(W)$, the assertion follows from Theorem \ref{mg}(2).
\end{proof}

The next application of Theorem \ref{mg} is our second main result of this paper, which provides many sufficient conditions for a subcategory to have {\em infinite} dimension.

\begin{thm}\label{mgc}
Let $(R,\m,k)$ be a $d$-dimensional Cohen-Macaulay local ring.
One has $\dim\X=\infty$ in each of the following cases:
\begin{enumerate}[\rm(1)]
\item
$R$ is locally a hypersurface on $\Spec_0(R)$.\\
$\X$ is a resolving subcategory of $\mod R$ with $\syz^dk\in\X\subsetneq\CM(R)$.
\item
$R$ is Gorenstein and locally a hypersurface on $\Spec_0(R)$.\\
$\X$ is a thick subcategory of $\lCM(R)$ with $\syz^dk\in\X\ne\lCM(R)$.
\item
$R$ is locally with minimal multiplicity on $\Spec_0(R)$.\\
$\X$ is a resolving subcategory of $\mod R$ with $\syz^dk\in\X\subsetneq\CM(R)$.
\item
$R$ is excellent, admits a canonical module $\omega$ and locally has finite Cohen-Macaulay representation type on $\Spec_0(R)$.\\
$\X$ is a resolving subcategory of $\mod R$ with $\{\omega,\syz^dk\}\subseteq\X\subsetneq\CM(R)$.
\item
$R$ is a hypersurface.\\
$\X$ is a resolving subcategory of $\mod R$ with $\add R\ne\X\subsetneq\CM(R)$.
\item
$R$ is a hypersurface.\\
$\X$ is a thick subcategory of $\lCM(R)$ with $\{0\}\ne\X\ne\lCM(R)$.
\end{enumerate}
\end{thm}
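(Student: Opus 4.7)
The plan is to argue by contradiction in each case, combining two ingredients already established in the excerpt. Proposition \ref{3cond}(1) identifies a resolving subcategory of $\CM(R)$ with the preimage of its nonfree locus under any one of four geometric hypotheses on $R$ and $\X$, while Theorem \ref{mg} rules out finite dimension for $\NF_{\CM}^{-1}(W)$ (and its stable counterpart $\lSupp^{-1}(W)$) whenever $W$ is a nontrivial specialization-closed subset of $\Sing R$. All six cases reduce, after a small amount of setup, to a direct invocation of these two results.

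For the resolving cases (1), (3), (4), (5), assume $\dim \X < \infty$. Each listed hypothesis is tailored to match exactly one of the four conditions in Proposition \ref{3cond}(1): case (5) matches the hypersurface condition, and cases (1), (3), (4) match the remaining three, with $\syz^d k \in \X$ (and $\omega \in \X$ in case (4)) providing precisely what those conditions demand. Hence $\X = \NF_{\CM}^{-1}(W)$ for $W := \NF(\X) \subseteq \Sing R$, and Theorem \ref{mg}(1) forces $W \in \{\emptyset, \Sing R\}$, so $\X \in \{\add R, \CM(R)\}$. In cases (1), (3), (4) the inclusion $\syz^d k \in \X$ gives $\CM_0(R) \subseteq \X$, hence $\m \in W$, ruling out $W = \emptyset$; in case (5) the hypothesis $\add R \ne \X$ rules this out directly. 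Combined with $\X \subsetneq \CM(R)$, this contradicts $\dim \X < \infty$.

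For the thick cases (2) and (6), I would pass through Definition \ref{defbar}: the abelian overline $\overline \X$ is a resolving subcategory of $\mod R$ contained in $\CM(R)$, and $\underline{\overline \X} = \X$ since $\X$ is already thick. Under the hypotheses of each case, $\overline \X$ meets one of the conditions of Proposition \ref{3cond}(1) --- the locally-hypersurface condition in case (2), the hypersurface condition in case (6) --- so $\overline \X = \NF_{\CM}^{-1}(\NF(\overline \X))$. Using the identities $\lSupp \X = \NF(\overline \X)$ and $\lSupp^{-1}(W) = \underline{\NF_{\CM}^{-1}(W)}$ recorded in Section 2, this yields $\X = \lSupp^{-1}(\lSupp \X)$. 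Theorem \ref{mg}(2) then forces $\lSupp \X \in \{\emptyset, \Sing R\}$, i.e.\ $\X \in \{0, \lCM(R)\}$, contradicting $0 \ne \X \ne \lCM(R)$ (noting in case (2) that $\syz^d k \ne 0$ in $\lCM(R)$ unless $R$ is regular, in which case the statement is vacuous since $\lCM(R) = 0$).

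The main obstacle is purely organizational: one must verify that each listed hypothesis fits one of the conditions in Proposition \ref{3cond}(1), and check that the overline/underline passage in cases (2) and (6) is compatible with the $\NF$/$\lSupp$ correspondence. Once these are in place, no additional technical work is required beyond invoking Theorem \ref{mg}.
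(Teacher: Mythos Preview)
Your proposal is correct and follows essentially the same route as the paper: both apply Proposition \ref{3cond}(1) to identify $\X$ (or $\overline\X$) with the preimage of its nonfree locus, then invoke Theorem \ref{mg} to force $\X$ into one of the two extreme subcategories, contradicting the hypotheses. Your treatment is slightly more explicit than the paper's (particularly in spelling out the overline/underline passage for the thick cases (2) and (6), and in handling the regular case), but the underlying argument is the same.
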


\begin{proof}
Note from the assumption on $\X$ that $R$ is nonregular in each of the cases (1)--(4).
By Proposition \ref{3cond}(1), we have $\add R\ne\X=\NF_{\CM}^{-1}(\NF(\X))$ in the cases (1),(3),(4),(5), and $\{0\}\ne\X=\lSupp^{-1}(\lSupp\X)$ in the cases (2),(6).
Theorem \ref{mg} completes the proof.
\end{proof}

Denote by $\Db(\mod R)$ the bounded derived category of $\mod R$, and by $\perf R$ the subcategory of perfect complexes (i.e., bounded complexes of projective modules).
Recently, Oppermann and \v{S}\'{t}ov\'{i}\v{c}ek \cite[Theorem 2]{OS} proved that every proper thick subcategories of $\Db(\mod R)$ containing $\perf R$ has {\em infinite} dimension.
In the case where $R$ is a hypersurface, we can refine this result as follows:

\begin{cor}
Let $R$ be a local hypersurface.
Let $\X$ be a thick subcategory of $\Db(\mod R)$ with $\perf R\subsetneq\X\subsetneq\Db(\mod R)$.
Then the Verdier quotient $\X/\perf R$ has infinite dimension, and in particular so does $\X$.
\end{cor}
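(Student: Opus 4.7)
The plan is to reduce to the stable category via Buchweitz's equivalence and then invoke Theorem \ref{mgc}(6). Since a local hypersurface $R$ is Iwanaga-Gorenstein, Buchweitz's theorem supplies a triangulated equivalence $\Db(\mod R)/\perf R \simeq \lCM(R)$. Under this equivalence, the standard Verdier localization formalism gives a bijection between thick subcategories of $\lCM(R)$ and thick subcategories of $\Db(\mod R)$ containing $\perf R$, with $\X$ corresponding to its image $\X/\perf R$. The strict inclusions $\perf R \subsetneq \X \subsetneq \Db(\mod R)$ in the hypothesis therefore translate to $\{0\} \subsetneq \X/\perf R \subsetneq \lCM(R)$, so $\X/\perf R$ is a nontrivial proper thick subcategory of $\lCM(R)$.

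With this translation in place, Theorem \ref{mgc}(6) applies directly and yields $\dim(\X/\perf R) = \infty$. This is the crux of the corollary; all the serious work---classification of resolving subcategories of $\CM(R)$ via their nonfree loci (Proposition \ref{3cond}) together with the isolated-singularity criteria of Theorem \ref{main} and Theorem \ref{mg}---has already been packaged into Theorem \ref{mgc}(6), so no further local computations are required here.

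For the ``in particular'' clause, I would verify the general inequality $\dim(\X/\perf R) \le \dim \X$. The Verdier quotient functor $\pi \colon \X \to \X/\perf R$ is a triangle functor that is the identity on objects; consequently it carries the ball construction into itself, i.e.\ $\pi(\langle G\rangle_{r}^{\X}) \subseteq \langle \pi(G)\rangle_{r}^{\X/\perf R}$ for every $G\in\X$ and every $r\ge 1$. Thus if $\X = \langle G \rangle_{n+1}^{\X}$ for some $G$, then every object of $\X/\perf R$ lies in $\langle G \rangle_{n+1}^{\X/\perf R}$, forcing $\dim(\X/\perf R)\le n$. Since the latter is infinite, so is $\dim\X$. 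The main conceptual ingredient is thus Buchweitz's equivalence together with the induced bijection of thick subcategories; this is standard once $R$ is recognised as Iwanaga-Gorenstein, and the only potential subtlety is checking that the strict inclusions are preserved by the bijection, which is immediate from the fact that a Verdier quotient by a thick subcategory is a localization.
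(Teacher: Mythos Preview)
Your proof is correct and follows essentially the same route as the paper: both reduce to $\lCM(R)$ via Buchweitz's equivalence, identify $\X/\perf R$ with a nontrivial proper thick subcategory there, and invoke Theorem \ref{mgc}(6). The only difference is cosmetic: for the ``in particular'' clause the paper simply cites \cite[Lemma 3.4]{R} or \cite[Lemma 3.5]{ddc}, whereas you spell out directly that an essentially surjective triangle functor (here the Verdier quotient $\pi$) carries balls into balls and hence cannot increase dimension.
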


\begin{proof}
Note that a thick subcategory of $\Db(\mod R)$ contains $\perf R$ if and only if it contains $R$.
The equivalence $\lCM(R)\cong\Db(\mod R)/\perf R$ of triangulated categories given by Buchweitz \cite[Theorem 4.4.1]{B} corresponds each thick subcategory of $\lCM(R)$ to a subcategory of $\Db(\mod R)/\perf R$ of the form $\X/\perf R$, where $\X$ is a thick subcategory of $\Db(\mod R)$ containing $\perf R$.
Thus Theorem \ref{mgc}(6) implies that $\X/\perf R$ has infinite dimension.
The last assertion is easy (cf. \cite[Lemma 3.4]{R} or \cite[Lemma 3.5]{ddc}).
\end{proof}

\section*{Acknowledgments}
The authors thank Luchezar Avramov, Craig Huneke, Osamu Iyama and Srikanth Iyengar for their valuable comments and useful suggestions.
This work was done during the visits of the second author to University of Kansas in May, July and August, 2011.
He is grateful for their kind hospitality.
Results in this paper were presented at the seminars in University of Kansas, Nagoya University, University of Nebraska-Lincoln, University of Missouri and University of Isfahan.
The authors thank the organizers of these seminars.

\end{document}